\def \za{\alpha}
\def \zd{\delta}
\def \ze{\varepsilon}
\def \zh{\theta}
\def \zl{\lambda}
\def \zm{\mu}
\def \zx{\xi}
\def \zp{\pi}
\def \zr{\rho}
\def \zt{\tau}
\def \zf{\varphi}
\def \zq{\psi}
\def \zw{\omega}
\def \zF{\Phi}
\def \zlma{\ell}
\def \zsu{\sum}
\def \zpr{\prod}
\def \zin{\cap}
\def \zun{\cup}
\def \zung{\bigcup}
\def \zdi{\oplus}
\def \zte{\otimes}
\def \zpu{\cdot}
\def \zpor{\times}
\def \zci{\circ}
\def \zmei{\leq}
\def \zmai{\geq}
\def \zco{\subset}
\def \zpe{\in}
\def \zeq{\equiv}
\def \znoi{\neq}
\def \znope{\not\in}
\def \zpar{\partial}
\def \zinf{\infty}
\def \zfl{\rightarrow}
\def \zbv{\mid}
\def \z/{\over}
\newcommand {\CC}{\mathbb C}
\newcommand {\RR}{\mathbb R}
\newcommand {\TT}{\mathbb T}
\newcommand {\ZZ}{\mathbb Z}
\newcommand {\NN}{\mathbb N}
\newcommand {\A}{\mathcal A}
\newcommand {\B}{\mathcal B}
\newcommand {\D}{\mathcal D}
\newcommand {\F}{\mathcal F}
\newcommand {\G}{\mathcal G}
\newcommand {\GG}{\mathbf G}
\newcommand {\HH}{\mathbf H}
\newcommand {\e}{\mathbf e}
\newcommand {\g}{\mathbf g}
\newcommand {\h}{\mathbf h}
\newcommand {\Id}{\operatorname{Id}}
\newcommand {\co}{\colon}
\DeclareMathOperator*{\Interseccion}{\mathop{\bigcap}}
\def\mylabel#1{\label{#1}}   
\newtheorem{theorem}{Theorem}[section]
\newtheorem*{theorem*}{Theorem}
\newtheorem{lemma}[theorem]{Lemma}
\newtheorem{corollary}[theorem]{Corollary}
\newtheorem*{corollary*}{Corollary}
\newtheorem{proposition}[theorem]{Proposition}
\newtheorem*{example*}{Example}
\newtheorem{question}[theorem]{Question}
\theoremstyle{definition}
\newtheorem{remark}[theorem]{Remark}
\newtheorem{example}[theorem]{Example}
\newcommand{\Aut}{\operatorname{Aut}}
\newcommand{\Diff}{\operatorname{Diff}}
\newcommand{\GL}{\operatorname{GL}}
\title[Smooth actions with free point are determined by two vector fields]
{Smooth actions of connected compact Lie groups with a free point are
determined by two vector fields}
\author{F.J.~Turiel}
\address[F.J.~Turiel]{
Departamento de {\'A}lgebra, Geometr{\'\i}a y Topolog{\'\i}a,
Facultad de Ciencias,
Campus de Teatinos, s/n,
29071-M{\'a}laga, Spain}
\email[F.J.~Turiel]{turiel@uma.es}
\author{A.~Viruel}
\address[A.~Viruel]{
Departamento de {\'A}lgebra, Geometr{\'\i}a y Topolog{\'\i}a,
Facultad de Ciencias,
Campus de Teatinos, s/n,
29071-M{\'a}laga, Spain}
\email[A.~Viruel]{viruel@uma.es}
\begin{document}

\begin{abstract}
Consider a smooth action $\mathbf G\times M \rightarrow M$ of a compact connected Lie group
$\GG$ on a connected manifold $M$. Assume the existence of a point of $M$ whose isotropy
group has a single element (a free point). Then we prove that there exist two complete
vector field $X,X_1$ such that their group of automorphisms equals $\mathbf G$ regarded as  a
group of diffeomorphisms of $M$ (the existence of a free point implies that the action of
$\mathbf G$ is effective). Moreover, some examples of effective actions with no free point where this result fails are exhibited.
\end{abstract}

\maketitle

\section{Introduction} \mylabel{sec-1}

In our two preceding works \cite{TV1,TV2}, focused on the inverse Galois problem's framework, we proved that any effective action on a manifold by a finite group or a torus can be described by a single vector field. However, it is crucial to note that the generality of this result does not extend to compact connected Lie groups, as exemplified in Example 6.3 of \cite{TV2}.

As a result, a compelling and challenging question naturally arises: how do we determine the actions of these compact connected Lie groups?  To address this intriguing inquiry, we embark on exploring the possibility of employing a family of vector fields as a potential solution.

In this paper, we demonstrate that for any connected compact Lie group smoothly acting on a connected manifold, the group can always be equated to the group of automorphisms of a couple of vector fields defined on this manifold, given the existence of a free point (Theorem \ref{T11}). Furthermore, we provide illustrative examples of Lie group effective actions lacking a free point, and in which the action cannot be described by any family of vector fields (Theorem \ref{T73}).

To ensure clarity and consistency throughout the manuscript, we begin by establishing the notation and conventions that will be employed throughout the paper. Our fundamental reference sources for differential topology, differential geometry, and Lie group actions are as follows: \cite{HI} for differential topology, \cite{KN} for differential geometry, and \cite{PA} and \cite{WA} for Lie groups actions. It is assumed that the reader is already acquainted with our two preceding papers \cite{TV1,TV2}.

Throughout this work, we consider manifolds (without boundary) and their associated objects to be real and of class $C^\infty$, with actions on the left, unless otherwise stated.

Consider a diffeomorphism $F\colon M\rightarrow M'$ and a vector field $X$ on $M$. The notation $F_* X$ represents the vector field on $M'$ defined as follows:
$$(F_* X)(y)= F_* (X(F^{-1}(y)))$$
for every $y\zpe M'$. Here, $F_* \colon TM \rightarrow TM'$ refers to the differential of the mapping $F$ that acts on tangent vectors between the tangent bundles $TM$ and $TM'$.

Let $\mathcal{F}$ be a family of vector fields on an $m$-manifold $M$. The group of automorphisms of $\mathcal{F}$, denoted as $\text{Aut}(\mathcal{F})$, comprises the subgroup of diffeomorphisms of $M$ that preserve each element of $\mathcal{F}$.

Now, let us consider an action of a Lie group $\GG$ on the manifold $M$. A point $p\in M$ is termed {\em free} if its isotropy group (or stabilizer) reduces to the neutral element $\mathbf{e}$ of $\GG$. Observe that if $p$ is free, then all points within its orbit are also free. It is worth noting that the existence of a free point guarantees the action's effectiveness, and conversely, this holds true when $\GG$ is a torus (as stated in Proposition 7.1 of \cite{TV2}). Thus, for torus actions, there is no difference between being effective and the existence of free points.

Furthermore, it is important to recall that if the action is effective, then $\GG$ can be treated as a subgroup of the group $\text{Diff}(M)$, which consists of diffeomorphisms of $M$.

Then, our main result is:

\begin{theorem}\mylabel{T11}

Consider an action of a connected compact Lie group $\GG$ on a connected manifold
$M$. If the action of $\GG$ has a free point then there exist two complete vector fields
$X,X_1$ such that $\GG=\Aut(X,X_1 )$.
\end{theorem}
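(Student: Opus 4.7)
The plan is to use the free orbit $\mathcal{O}=\GG\cdot p$ as a rigid skeleton on which the group law of $\GG$ is encoded, and then glue this encoding to a pair of globally defined $\GG$-invariant vector fields. Since $p$ is free and $\GG$ is compact, $\mathcal{O}$ is a closed embedded submanifold, $\GG$-equivariantly diffeomorphic to $\GG$ with the left-translation action, and the equivariant tubular neighborhood theorem supplies a $\GG$-invariant open set $U$ together with a $\GG$-equivariant diffeomorphism $U\cong \GG\times D$, where $D$ is an open disk and $\GG$ acts by left translation on the first factor only.

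On $U$, any $\GG$-invariant vector field has the form $(g,y)\mapsto(\zeta(g,y),b(y))$, where $\zeta(\cdot,y)$ is a left-invariant vector field on $\GG$ depending smoothly on $y\in D$ and $b$ is a vector field on $D$. I would pick $v,v_1\in\mathfrak{g}$ that generate $\mathfrak{g}$ as a Lie algebra (possible since $\GG$ is connected) and define $X$ and $X_1$ on $U$ so that their restrictions to $\mathcal{O}=\GG\times\{0\}$ are the left-invariant fields associated with $v$ and $v_1$. Any diffeomorphism of $\GG$ preserving these restrictions commutes with their flows, which are right translations by the one-parameter subgroups $\exp(tv)$ and $\exp(tv_1)$; since these generate $\GG$, such a diffeomorphism commutes with every right translation and must itself be a left translation, hence an element of $\GG$. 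The transverse components $b,b_1$ on $D$ are chosen along the lines of \cite{TV1,TV2} so that $(b,b_1)$ has trivial joint automorphism group fixing $0\in D$, forcing any automorphism of $(X,X_1)$ that already fixes $p$ to be the identity on the transverse slice as well.

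The next task is to extend $X,X_1$ from $U$ to all of $M$ while keeping $\GG$-invariance, ensuring completeness by rescaling with a positive function constant along the joint flows (as in \cite{TV1,TV2}), and arranging that $\mathcal{O}$ is detected by some intrinsic property of $(X,X_1)$ — for instance by choosing $b,b_1$ so that an algebraic condition on their jets isolates $\mathcal{O}$ inside $M$. Given $\phi\in\Aut(X,X_1)$, this intrinsic characterization forces $\phi(\mathcal{O})=\mathcal{O}$, and the analysis on $\mathcal{O}$ then shows that $\phi|_\mathcal{O}$ is left translation by some $g\in\GG$. Replacing $\phi$ by $g^{-1}\phi$, one reduces to showing that $\phi|_\mathcal{O}=\Id$ implies $\phi=\Id$: the slice construction forces $\phi=\Id$ on a neighborhood of $\mathcal{O}$, after which the transitivity of the pseudogroup generated by the flows of $X$ and $X_1$ propagates the identity to all of $M$.

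The hardest step will be the construction of the transverse data $(b,b_1)$ together with the $\GG$-invariant extension of $X,X_1$ to $M\setminus U$: one must simultaneously secure the local rigidity of $(b,b_1)$ at $0$, preserve $\GG$-invariance and completeness globally, and keep $\mathcal{O}$ intrinsically detectable from $(X,X_1)$. This balancing act is where the single-vector-field engineering developed in \cite{TV1,TV2} will carry most of the weight.
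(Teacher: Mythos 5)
Your outline shares the paper's basic strategy (encode $\GG$ on free orbits by a pair of left-invariant vector fields coming from a generating pair, rigidify the transverse direction, detect the skeleton intrinsically, then propagate), but two of your steps have real problems. First, a concrete error: you cannot in general ``pick $v,v_1\in\mathfrak g$ that generate $\mathfrak g$ as a Lie algebra'' --- for $\GG=\TT^n$ with $n\geq 3$ the algebra is abelian, so the subalgebra generated by two elements is their linear span and is at most $2$-dimensional. What is actually available (and what the paper uses, via the Schreier--Ulam theorem) is a \emph{dense couple}: elements $v,v_1$ such that $\exp(tv)$ and $\exp(tv_1)$ generate a dense subgroup of $\GG$. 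Your conclusion that a diffeomorphism of $\GG$ preserving both left-invariant fields must be a left translation survives this correction by continuity, but the hypothesis as you state it is false.

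Second, and more seriously, your transverse rigidification begs the main question. You choose $(b,b_1)$ on the slice $D$ with trivial joint automorphism group and conclude that an automorphism of $(X,X_1)$ fixing $p$ is the identity on the slice --- but this presupposes that such an automorphism respects the product decomposition $U\cong\GG\times D$, i.e.\ carries the $\GG$-factor foliation and the $D$-factor foliation each to itself. Establishing that splitting is precisely the hard technical content of the paper (Lemma \ref{L31} and Proposition \ref{P32}: one computes the full centralizer algebra $\mathcal L(X,X_1)$, identifies its two ideals $\mathcal L_1$ and $\mathcal L_2$, and shows that both factor foliations are invariants of any automorphism); nothing in your setup delivers it, and without it the triviality of $\Aut(b,b_1)$ proves nothing. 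Relatedly, your propagation step does not work as stated: for vector fields of the kind being built here ($X$ projecting to a gradient-like field on the orbit space, $X_1$ essentially vertical plus a multiple of $X$), the pseudogroup generated by the two flows is not transitive, and the accessible set from a neighborhood of the single orbit $\mathcal O$ is contained in the preimage of the trajectories meeting the corresponding slice, which is generally not dense in $M$. The paper instead uses a whole locally finite family of skeleton fibres $\pi^{-1}(i)$, $i\in I$, whose outsets $\pi^{-1}(R_i)$ are dense, and then a $\GG$-invariant Riemannian metric: once $F$ coincides with a left translation (an isometry) on a nonempty open set, connectedness forces equality everywhere. You will need either such a global skeleton or the isometry argument; a single tubular neighborhood cannot control $\Aut(X,X_1)$ away from $\mathcal O$. (The reduction from ``has a free point'' to ``free action'' via the dense open set of free points and damping functions vanishing on its complement is also missing, though that part is routine.)
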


On the other hand, Theorem \ref{T73} demonstrates that the aforementioned hypothesis of the existence of a free point cannot be substituted by merely assuming the action to be effective (further details are provided below).

Consider a family of vector fields, denoted as $\F$, defined on the manifold $M$. In the context of this paper, we shall use the terms {\em $\F$ determines} or {\em $\F$ describes} to refer to the action of a transformation group $\GG$, if and only if $\GG$ is the automorphism group of $\F$, expressed as $\GG = \text{Aut}(\mathcal{F})$. It is important to note that under such circumstances, every element belonging to the family $\mathcal{F}$ is inherently $\GG$-invariant.

\subsection*{Organization of the paper:} This paper is structured into eight sections, with the first section serving as the introduction. Sections
\ref{sec-2} and \ref{sec-3} present the essential results concerning Lie groups and vector fields, which are needed later on. The main result is demonstrated in Sections \ref{sec-4} and \ref{sec-5}.

In Section \ref{sec-7}, it is shown that the assumption of the existence of a free point cannot be omitted. Specifically, it is showed that the natural action of $\operatorname{U}(n)$
on $\CC^n \zeq\RR^{2n}$, $n\zmai 2$, is described by two vector fields. However, no family of $\operatorname{U}(n)$-invariant vector fields can completely determine the natural action of $\operatorname{U}(n)$ on $\CC^n \smallsetminus\{0\}$ (see Example \ref{E74}).

Section \ref{sec-8} presents additional examples of compact connected linear groups.
For instance, we consider the natural action of $\operatorname{SO}(n)$ for $n\geq 3$ and $\operatorname{SU}(m)$ for $m\geq 3$. In these cases, it is shown that the action cannot be described by invariant vector fields. However, a different outcome is observed in the case of the symplectic group, where its natural action can always be determined using two invariant vector fields. We end this section with a result on the stability of some determinable actions
(Proposition \ref{P82} and Remark \ref{R83}).

The concluding section of this paper presents a curated list of open problems and questions that naturally arise from our work.

\section{Preliminaries on Lie groups} \mylabel{sec-2}

In this section, we will present some fundamental concepts related to Lie groups, which will be crucial for our subsequent discussions.

Throughout the
remainder of this paper $\GG$ will be a connected compact Lie group of dimension
$n$, $\e$ its neutral element, and $\G$ its Lie algebra of left-invariant vector fields. When required, we will treat $\G$ as $T_\e \GG$ using the vector isomorphism $X \in \G \mapsto X(\e) \in T_\e \GG$.
We say that a couple $X,Y$ of elements of $\G$ is {\em dense} if the connected Lie
subgroup $\HH$  corresponding to the Lie subalgebra $\mathcal{H}$ spanned by $X$ and $Y$ is dense within $\GG$.

Note that if the couple $X,Y$ is dense and $f\co\GG \zfl\RR$ is a function such that $Xf=Yf=0$,
then $f$ is constant.

We show that dense couples do always exist.

\begin{proposition}\mylabel{P21}
In the algebra $\G$ of a connected compact Lie group $\GG$ there exist dense couples.
\end{proposition}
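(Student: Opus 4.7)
The plan is to fix a maximal torus $\TT$ in $\GG$, let $\mathcal{T}$ denote its Lie algebra, choose $X \zpe \mathcal{T}$ so that $\overline{\{\exp(tX) \zbv t \zpe \RR\}} = \TT$, and then select $Y \zpe \G$ so that $\mathcal{T}$ together with $Y$ Lie-generates $\G$. The closure $\overline{\HH}$ of the associated immersed subgroup $\HH$ is a closed (hence Lie) subgroup of $\GG$ that contains both $\overline{\exp(\RR X)} = \TT$ and $\exp(\RR Y)$; its Lie algebra therefore contains $\mathcal{T} \zun \{Y\}$ and hence equals $\G$ by the choice of $Y$, forcing $\overline{\HH} = \GG$ by connectedness.

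First, $\TT$ being a compact connected abelian group yields $\TT \zeq \mathcal{T}/\Lambda$ for some lattice $\Lambda$ in $\mathcal{T}$, and by Kronecker's density theorem the set of $X \zpe \mathcal{T}$ whose coordinates in a basis of $\Lambda$ are linearly independent over $\QQ$ is dense in $\mathcal{T}$; each such $X$ topologically generates $\TT$. I would further require $X$ to be \emph{regular}, meaning that the scalars $\alpha(X)$ taken by the roots of $(\G_{\CC}, \mathcal{T}_{\CC})$ are nonzero and pairwise distinct; this condition is likewise dense.

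The technical core is producing $Y \zpe \G$ such that $\mathcal{T} \zun \{Y\}$ Lie-generates $\G$. Working with the root space decomposition $\G_{\CC} = \mathcal{T}_{\CC} \zdi \bigoplus_\alpha \G_\alpha$, with each $\G_\alpha$ one-dimensional over $\CC$, a generic $Y$ has a nonzero component $Y_\alpha$ in every $\G_\alpha$. Since $\mathrm{ad}_X$ acts on $\G_\alpha$ by the scalar $\alpha(X)$ and these scalars are pairwise distinct by regularity, a Vandermonde argument applied to the iterates $Y,\, \mathrm{ad}_X Y,\, \mathrm{ad}_X^2 Y, \ldots$ recovers each $Y_\alpha$ inside the Lie subalgebra generated by $\mathcal{T} \zun \{Y\}$; one-dimensionality of $\G_\alpha$ then places the whole $\G_\alpha$ there. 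Together with $\mathcal{T}_{\CC}$ this exhausts $\G_{\CC}$, hence (restricting to the real form) $\G$ itself.

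The main obstacle is this last generation statement. It is essential here that $\overline{\HH}$ contain the \emph{entire} torus $\TT$ rather than just $\exp(\RR X)$, since when $\dim \TT \zmai 2$ the element $X$ alone cannot Lie-generate $\mathcal{T}$. The remaining ingredients (Kronecker density of generic $X$, the Vandermonde step, the fact that $\overline{\HH}$ is a Lie subgroup via Cartan's closed subgroup theorem, and connectedness of $\GG$ forcing $\overline{\HH} = \GG$) are standard once this generation point is in hand.
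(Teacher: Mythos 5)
Your argument is correct, but it takes a genuinely different route from the paper. The paper's proof is essentially two lines: it invokes the classical theorem of Schreier and Ulam that a compact connected group admits a dense subgroup generated by two elements $\g,\h$, and then uses surjectivity of the exponential map to write $\g=\exp(X)$, $\h=\exp(Y)$; the connected subgroup attached to the subalgebra generated by $X,Y$ contains $\exp(\RR X)$ and $\exp(\RR Y)$, hence $\g$ and $\h$, hence is dense. You instead give a self-contained structure-theoretic construction: a regular topological generator $X$ of a maximal torus $\TT$ (Kronecker plus a Baire-category intersection with the regular set), a generic $Y$ with nonzero components in every root space, and a Vandermonde argument on the $\operatorname{ad}_X$-iterates to show that the Lie algebra of $\overline{\HH}$, which contains $\mathcal{T}$ and $Y$, is all of $\G$. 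The key subtlety — that one must feed the whole of $\mathcal{T}$ (obtained from $\overline{\exp(\RR X)}=\TT$) into the generated subalgebra, since the coroots alone miss the center of $\G$ and $X$ by itself does not Lie-generate $\mathcal{T}$ when $\dim\TT\geq 2$ — is exactly the point you flag, and you handle it correctly. What the two approaches buy: the paper's is far shorter but outsources the substance to \cite{SU}; yours is longer but elementary given standard root-space theory, and it additionally shows that dense couples are abundant (the admissible $(X,Y)$ form a dense subset of $\mathcal{T}\times\G$), which the citation-based proof does not directly give.
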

\begin{proof}
According to a classical result by Schreier and Ulam \cite{SU}, if $\GG$ is both connected and compact, then there always exist elements $\g$ and $\h$ in $\GG$ such that the group generated by $\g$ and $\h$ is dense in $\GG$. In this case, since the exponential map of $\GG$ is surjective onto the group, there exist $X$ and $Y$ in the Lie algebra $\G$ such that $\exp(X)=\g$ and $\exp(Y)=\h$. It is evident that the couple $X, Y$ is dense.
\end{proof}

Given an abstract group $G$ acting on a set $S$ on the left (resp.\ right), and $g\zpe G$,
$L_g \co S\zfl S$ (resp.\ $R_g \co S\zfl S$) denotes the induced by left (resp.\ right) multiplication by $g$.
Observe that if $S=G$ endowed with the natural left and right $G$-actions, then the maps $L_g$ and $R_h$ commute for any $g,h\zpe G$. Therefore:

\begin{lemma}\mylabel{L22}
Let $\GG$ be a Lie group and $\g\zpe\GG$. If  $X$ is a left-invariant vector field on $\GG$, then
$(R_\g )_* X$ is the left-invariant vector field with initial condition
$$((R_\g )_* X)(\e)=(\mathbb A_{\g^{-1}})_* (X(\e))$$
where $\mathbb A_\h =L_\h \zci R_{\h^{-1}}$.

Similarly, if $X'$ is a right invariant vector field on $\GG$, then $(L_g )_* X'$ is right invariant and
$$((L_\g )_* X')(\e)=(\mathbb A_{\g})_* (X'(\e)).$$
\end{lemma}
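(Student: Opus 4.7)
The plan is to reduce the lemma to two elementary observations: first, that left and right translations on $\GG$ commute as diffeomorphisms, which immediately implies the left-invariance (respectively right-invariance) of the pushforward; second, that the initial condition follows from unwinding the definition of the pushforward and applying the left-invariance of $X$ (respectively right-invariance of $X'$).

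For the first part I would argue as follows. For any $\h\in\GG$, the identity $L_\h\circ R_\g = R_\g\circ L_\h$ holds pointwise on $\GG$. Taking differentials yields $(L_\h)_*\circ (R_\g)_* = (R_\g)_*\circ (L_\h)_*$ on vector fields, so
$$(L_\h)_* (R_\g)_* X = (R_\g)_* (L_\h)_* X = (R_\g)_* X,$$
the last equality using the left-invariance of $X$. Thus $(R_\g)_* X$ is left-invariant, and is therefore determined by its value at $\e$.

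For the second part I would simply compute, using the definition of pushforward given in the introduction:
$$((R_\g)_* X)(\e) = (R_\g)_*\bigl(X(R_\g^{-1}(\e))\bigr) = (R_\g)_*\bigl(X(\g^{-1})\bigr).$$
Since $X$ is left-invariant, $X(\g^{-1}) = (L_{\g^{-1}})_*(X(\e))$. Combining and using once more that $L$ and $R$ translations commute,
$$((R_\g)_* X)(\e) = (R_\g)_*(L_{\g^{-1}})_*(X(\e)) = (R_\g\circ L_{\g^{-1}})_*(X(\e)) = (L_{\g^{-1}}\circ R_\g)_*(X(\e)),$$
which is exactly $(\mathbb A_{\g^{-1}})_*(X(\e))$ by the definition of $\mathbb A$.

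The right-invariant case is entirely symmetric: one replaces $L$ with $R$ throughout and uses the right-invariance of $X'$ at the step where left-invariance was used above. I do not foresee any real obstacle here; the only point requiring care is the bookkeeping of the arguments $\g$ versus $\g^{-1}$ in the pushforward and in $\mathbb A$, so I would double-check by evaluating both sides of the claimed formula on a concrete example (e.g.\ $\GG = \GL(n,\RR)$ with $X$ a matrix basis vector) before writing up the formal proof.
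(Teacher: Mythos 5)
Your proof is correct and follows exactly the route the paper intends: the paper derives the lemma directly from the observation that $L_\h$ and $R_\g$ commute, leaving the unwinding of the initial condition to the reader, which is precisely what you carry out. The bookkeeping of $\g$ versus $\g^{-1}$ in your computation is right: $\mathbb A_{\g^{-1}}=L_{\g^{-1}}\circ R_\g$, matching your final expression.
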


Let $Z$ be a vector field defined on a product manifold $P_1 \zpor P_2$. We say that
$Z$ is {\em horizontal} if $(\zp_2 )_* Z=0$, and {\em strongly horizontal}
or {\em s-horizontal}  if $Z$ is horizontal  and preserves the foliation given by the second factor
(roughly speaking,  $Z$ is tangent to the first factor and independent of the
``second variable''). In a similar way one can define the notions of {\em vertical} and
{\em s-vertical} vector field on $P_1 \zpor P_2$.

Observe that given a vector field $Z_1$ on $P_1$ (resp.\ $Z_2$ on $P_2$) there exists
one and only one vector field on $P_1 \zpor P_2$, which we will still denote as $Z_1$ (resp.\ $Z_2$),
such that it is  s-horizontal (resp.\ s-vertical) and $(\zp_1 )_* Z_1 =Z_1$
(resp.\ $(\zp_2 )_* Z_2 =Z_2$).

In a more general setting, given a map $\zp \co P\zfl Q$ between manifolds, a vector
field $Y$ on $P$ is called {\em vertical} if $\zp_* Y=0$.

Let $A$ be an open subset of $\RR^k$ and $\GG$ be a Lie group. Consider a vector field $H$
on $A$ and a map $\zf\co A \zfl T_\e \GG$. Let
$F\co A\zpor\GG \zfl A\zpor\GG$ be the diffeomorphism given by $F(x,\g)=(x,\g\zpu \exp(\zf(x)))$. Then:

\begin{lemma}\mylabel{L23}
If $\zf$ takes values in an abelian subalgebra $\G'$ of $\G\zeq  T_\e \GG$, and $H$
is thought as an s-horizontal vector field on $A\zpor\GG$, then $F_* H=H+V$
where $V$ is vertical, and every $V(x,\_)$, $x\zpe A$, is the left-invariant vector
field with initial condition $V(x,\e)=(\zf_* H)(x)$.
\end{lemma}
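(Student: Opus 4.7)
The plan is to compute $F_* H$ directly at a point $(x,\g)$ using the chain rule, isolating what comes from the $\GG$-component of $F$ (which will turn out to be the left-invariant piece $V$) from the identity action on the $A$-component.

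First, I would unwind the pushforward. Since $F$ fixes the first coordinate, $\zp_1 \zci F=\zp_1$; writing $F(x,\h)=(x,\zF(x,\h))$ with $\zF(x,\h)=\h\zpu\exp(\zf(x))$, the differential splits as
\[
F_*(v,w)=\bigl(v,\ D_x\zF(x,\h)\zpu v+D_\h \zF(x,\h)\zpu w\bigr).
\]
Applying the usual definition $(F_* H)(x,\g)=F_*\bigl(H(F^{-1}(x,\g))\bigr)$ at $q=F^{-1}(x,\g)=(x,\g\zpu\exp(-\zf(x)))$, and using that $H$ is s-horizontal so $H(q)=(H(x),0)$, the $w$-term drops out, and I only need $D_x\zF$ at $q$, evaluated on $H(x)$.

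Next, I would handle the derivative $D_x\zF$. For fixed $\h$, the map $x\zmapsto \h\zpu\exp(\zf(x))$ factors as left multiplication by $\h$ composed with $\exp\zci\zf$. Thus
\[
D_x \zF(x,\h)\zpu v=(L_\h)_*\bigl(d\!\exp_{\zf(x)}(\zf_* v)\bigr).
\]
Here is the place where the abelian hypothesis on $\G'$ does the essential work: because $\zf$ takes values in the abelian subalgebra $\G'$, and $\zf_* v\zpe \G'$ too, I may use the identity $\exp(X+tY)=\exp(X)\zpu\exp(tY)$ for $X,Y\zpe\G'$ to compute
\[
d\!\exp_{\zf(x)}(\zf_* v)=\frac{d}{dt}\Big|_{t=0}\exp(\zf(x))\zpu\exp(t\zf_* v)=(L_{\exp(\zf(x))})_*(\zf_* v).
\]
This is the only non-routine step; without the abelian assumption one would get the more complicated Dynkin-type formula for $d\!\exp$, and the conclusion would fail.

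Finally, I would assemble the pieces. Combining the two left-translations, $D_x\zF(x,\h)\zpu v=(L_{\h\zpu\exp(\zf(x))})_*(\zf_* v)$, and evaluating at $\h=\g\zpu\exp(-\zf(x))$ gives the product $\h\zpu\exp(\zf(x))=\g$. Plugging in $v=H(x)$ therefore yields
\[
(F_* H)(x,\g)=\bigl(H(x),\ (L_\g)_*((\zf_* H)(x))\bigr)=H(x,\g)+V(x,\g),
\]
with $V$ vertical (no $A$-component) and, for each fixed $x$, $V(x,\zpu)$ the left-invariant vector field on $\GG$ whose value at $\e$ is $(\zf_* H)(x)$. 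That is exactly the claim.
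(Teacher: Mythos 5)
Your proof is correct, and it takes a genuinely different route from the paper's. You do everything in one global chain-rule computation: the verticality of $F_*H-H$ falls out because $\zp_1\zci F=\zp_1$ and $H$ has no $\GG$-component, and the value of the vertical part is obtained from the formula $d\exp_{X}(Y)=(L_{\exp X})_*Y$ for commuting $X,Y$, which is where the abelian hypothesis on $\G'$ enters (both $\zf(x)$ and $\zf_*v$ lie in $\G'$, so they commute). The paper instead argues in two stages: first it shows that $V(x,\_)$ is left-invariant purely by equivariance (since $F=R_{\exp(\zf(x))}$ fibrewise, $F$ commutes with every $L_{\tilde\g}$, so $F_*H$ is left-invariant and its vertical part must be too), and only then computes the initial condition $V(x,\e)$ by replacing $\GG$ with the closed abelian subgroup $\overline{\GG'}\cong\RR^r\zpor\TT^s$, where $\exp$ and the invariant vector fields are explicit in coordinates and the computation is immediate. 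Your approach buys directness and avoids the reduction to $\RR^r\zpor\TT^s$ (and the attendant point that it suffices to restrict $F$ to $A\zpor\overline{\GG'}$), at the price of invoking the differential of the exponential map on commuting arguments; the paper's approach is more elementary in that it never needs any property of $d\exp$ beyond what is visible in coordinates on an abelian group, and its first step isolates the left-invariance of $V$ as a structural consequence of commuting actions rather than of an explicit formula. Both hinge on the abelian hypothesis at the same essential point, and both are complete.
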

\begin{proof}
Consider the obvious left and right $\GG$-actions on $A\zpor\GG$ given by $\h\zpu(x,\g)=(x,\h\g)$ and
$(x,\g)\zpu\h=(x,\g\h)$, and observe these actions commute too. Since $F(x,\g)=R_{\exp(\zf(x))}(x,\g)$,
then $F$ and $L_{\tilde\g}$, $\tilde\g\zpe\GG$, commute. Therefore:
$$(L_{\tilde\g} )_* (F_* H)=F_* ((L_{\tilde\g} )_*H)=F_* H$$
hence $(L_{\tilde\g} )_* V=V$, which implies that every $V(x,\_)$ is left-invariant.

Now we compute the initial conditions that determine $V(x,\_)$. Let $\GG'$ be the connected Lie subgroup of $\GG$ with
algebra $\G'$; set $\GG_0 =\overline{\GG'}$. Then $\GG_0$ is a connected abelian
closed Lie subgroup of $\GG$ and a regular submanifold. As $\exp(\zf(A))\zco\GG_0$,
it is enough to compute the initial condition for $F\co A\zpor\GG_0 \zfl A\zpor\GG_0$.
In other words, we may assume that $\GG$ is abelian and connected by replacing
it with $\GG_0$ if needed, thus we assume that $\GG=\RR^r \zpor\TT^s$.

On $\RR^r \zpor\TT^s$ consider coordinates
$(y,\zh)=(y_1 ,\dots,y_r ,\zh_1 ,\dots,\zh_s )$ and the vector fields
$\zpar/\zpar y_1 ,\dots,\zpar/\zpar y_r ,\zpar/\zpar \zh_1 ,\dots,\zpar/\zpar \zh_s$.
Then a vector field $U$ on $\RR^r \zpor\TT^s$ is invariant (left or right)
whenever there exist $a_1 ,\dots,a_r ,b_1,\dots,b_s \zpe \RR$ such that
$U=\zsu_{j=1}^r a_j \zpar/\zpar y_j +
\zsu_{\zlma=1}^s b_{\zlma}\zpar/\zpar \zh_\zlma$.

If $T_\e (\RR^r \zpor\TT^s )$ is thought as $\RR^r \zpor\RR^s$ through
$\zpar/\zpar y_1 ,\dots,\zpar/\zpar y_r ,\zpar/\zpar \zh_1 ,\dots,\zpar/\zpar \zh_s$, then
$\exp(v,w)=(v,p(w))$ where $p\co \RR^s \zfl\TT^s \zeq(\RR^s /2\zp\ZZ^s )$ is
the canonical projection. Thus $F(x,(y,\zh))={\big(}x,y+\zf_1 (x),\zh+p(\zf_2 (x)){\big)}$
where $\zf_1 =\zp_1 \zci\zf$ and $\zf_2 =\zp_2 \zci\zf$.

Finally, a straightforward computation shows that $V(x,\e)=(\zf_* H)(x)$.
\end{proof}

The following is a restatement of Exercise 9 found in \cite[p. 134]{WA}:

\begin{lemma}\mylabel{L24}
Let $\GG$ be a connected Lie group and $\zq\co\GG\zfl\GG$ be a diffeomorphism  such that:

\begin{enumerate}[label={\rm (\alph{*})}]
\item\mylabel{L24a}
$\zq$ maps left-invariant vector fields to left-invariant vectors fields (or alternatively right ones
to right ones).
\item\mylabel{L24b}
$\zq(\e)=\e$.
\end{enumerate}

Then $\zq$ is a Lie group isomorphism.
\end{lemma}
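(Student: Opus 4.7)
The plan is to reduce the claim to showing that $\zq$ is a group homomorphism, since a diffeomorphism that is a group homomorphism is automatically a Lie group isomorphism. I will carry out the argument assuming $\zq$ preserves left-invariant vector fields; the right-invariant alternative goes through symmetrically, exchanging the roles of $L_\g$ and $R_\g$ and of right/left exponential translation.

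The central observation is that for $X\zpe\G$ the flow of the corresponding left-invariant vector field is right translation by $\exp(tX)$, that is $\phi_t^X = R_{\exp(tX)}$. Since $\zq_* X$ is again left-invariant by hypothesis, the general intertwining of flows $\zq \zci \phi_t^X = \phi_t^{\zq_* X}\zci \zq$, evaluated at $\e$ and combined with $\zq(\e)=\e$, yields $\zq(\exp(tX))=\exp(t\zpu d\zq_\e (X(\e)))$. Thus $\zq$ intertwines the exponential map with the linear isomorphism $d\zq_\e\co \G\zfl\G$.

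Next, applying the same conjugation of flows at an arbitrary $\g\zpe\GG$ I obtain
$$\zq(\g\zpu\exp(tX))=\zq(\g)\zpu\exp(t\zpu d\zq_\e (X(\e)))=\zq(\g)\zpu \zq(\exp(tX)).$$
Since $\GG$ is connected, every element of $\GG$ is a finite product of exponentials, and iterating this identity produces $\zq(\g\h)=\zq(\g)\zq(\h)$ for all $\g,\h\zpe\GG$. Being simultaneously a diffeomorphism and an abstract group homomorphism, $\zq$ is then a Lie group automorphism.

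No step looks particularly delicate: the argument is essentially formal once the flow of a left-invariant vector field is identified as right translation by $\exp(tX)$. The only point deserving attention is the identity $(\zq_* X)(\e)=d\zq_\e (X(\e))$, which is precisely where the hypothesis $\zq(\e)=\e$ enters (via $\zq^{-1}(\e)=\e$), together with the appeal to connectedness when passing from a single exponential factor to an arbitrary product. In the right-invariant variant, the only change is that the flow of a right-invariant vector field $X'$ is left translation $L_{\exp(tX'(\e))}$, and the same computation yields $\zq(\exp(tX')\zpu\g)=\zq(\exp(tX'))\zpu\zq(\g)$, leading to the same conclusion.
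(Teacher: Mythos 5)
Your proof is correct: identifying the flow of a left-invariant field as right translation by $\exp(tX)$, using the conjugation of flows under $\zq_*$ together with $\zq(\e)=\e$, and then invoking connectedness to pass from exponentials to arbitrary products is exactly the standard argument. The paper itself offers no proof here --- it merely cites Exercise 9 of \cite[p.~134]{WA} --- and your write-up is the expected solution to that exercise, so there is nothing to contrast.
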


\section{Some useful results on vector fields} \mylabel{sec-3}

In this section, we introduce key results pertaining to vector fields that will be utilized subsequently. We follow the convention established in Section \ref{sec-2}, and employ the notation defined therein. Moreover, from now on $\zx = \zsu_{j=1}^k x_j \zpar/\zpar x_j$ denotes the radial vector field of $\RR^k$, where $k\geq 1$, endowed with the canonical coordinates $(x_1,\ldots,x_k)$ of $\RR^k$. As needed, we may also view $\zx$ as an $s$-horizontal vector field on $\RR^k \zpor \GG$.

Let $P$ be a regular submanifold of a manifold $Q$ and $Z$ be a vector field defined
on an open subset of $Q$ that includes $P$. We say that  $Z$
{\em is tangent to $P$ at order $1$} if:
\begin{enumerate}[label={\rm (\arabic{*})}]
\item\mylabel{D1}$Z$ vanishes at $P$.
\item\mylabel{D2} For every vector field $Y$ defined on an open subset $B$ of $Q$, the vector
field $[Z,Y]$ is tangent to $P$ at $B\zin P$.
\end{enumerate}

On $\GG$ consider a left-invariant vector field $V$ and think of it as an s-vertical
vector field on $\RR^k \zpor\GG$. Set $X=\zx +V$ on $\RR^k \zpor\GG$. Then
$\{0\}\zpor\GG$ is the set of those points whose $X$-trajectory has compact adherence.
Therefore, $\{0\}\zpor\GG$ is an invariant of $X$, and if $Y$ is a vector field commuting
with $X$, then $Y$ has to be tangent to $\{0\}\zpor\GG$.

On the other hand, consider a vertical vector field $V_1$
such that each $V_1 (x,\_)$, $x\zpe\RR^k$, is left-invariant, and consider a function
$h\co\RR^k \zfl\RR$ with compact support such that $j^{4}_0 h=0$
but $j^{5}_0 h\znoi 0$. Set $X_1 = V_1 +(h\zci\zp_1 )X$.

Let  $\mathcal L(X,X_1 )$ be the set of those vector fields $Y$ on $\RR^k \zpor\GG$
such that:

\begin{enumerate}[label={\rm (\alph{*})}]
\item\mylabel{Da}
$[X,Y]=0$
\item\mylabel{Db}
$[X_1 ,Y]$ is tangent to $\{0\}\zpor\GG$ at order $1$.
\end{enumerate}
Finally, let $\{\zh_1 ,\dots,\zh_n \}$ be a basis of the algebra of right
invariant vector fields of $\GG$, and think of each $\zh_r$, $r=1,\dots,n$, as an s-vertical vector field
on $\RR^k \zpor\GG$. Then:

\begin{lemma}\mylabel{L31}
If $V,V_1 (0,\_)$ is a dense couple of $\G$, then $\mathcal{L}(X,X_1 )$ is a Lie
subalgebra of the Lie algebra of vector fields on $\RR^k \zpor\GG$
of dimension $k^2 +n$ with basis
$$ \left\{x_j {\frac {\zpar} {\zpar x_\zlma}}, \zh_r  \right\},\quad
j,\zlma=1,\dots, k;\,\, r=1,\dots,n.$$
\end{lemma}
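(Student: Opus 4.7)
The plan is to check that the $k^2+n$ listed vector fields form a linearly independent subset of $\mathcal L(X,X_1)$, that their linear span is closed under Lie brackets, and that every element of $\mathcal L(X,X_1)$ lies in this span. Linear independence is obvious since the $x_j\zpar/\zpar x_\zlma$ are s-horizontal and the $\zh_r$ are s-vertical. Right-invariant fields commute with left-invariant ones on $\GG$ and depend only on $\g$, so each $\zh_r$ commutes with $V$, with every $V_1(x,\_)$, with $\zx$, and with the scalar $h\zci\zp_1$; hence $[X,\zh_r]=[X_1,\zh_r]=0$. For $W_{j\zlma}:=x_j\zpar/\zpar x_\zlma$, Euler's identity gives $[\zx,W_{j\zlma}]=0$, and any vertical field whose vertical factor is independent of $x$ commutes with $W_{j\zlma}$, so $[X,W_{j\zlma}]=0$. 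Writing $V_1=\zsu_r f_r(x)\tilde V_r$ in a left-invariant basis $\tilde V_1,\dots,\tilde V_n$ of $\G$, a direct computation yields $[V_1,W_{j\zlma}]=-\zsu_r x_j(\zpar f_r/\zpar x_\zlma)\tilde V_r$, vertical and divisible by $x_j$, while $-(x_j\zpar h/\zpar x_\zlma)X$ is of order $\zmai 5$ in $x$ by $j^4_0h=0$; both are tangent to $\{0\}\zpor\GG$ at order~$1$. The relations $[W_{j\zlma},W_{j'\zlma'}]=x_j\zd_{\zlma j'}\zpar/\zpar x_{\zlma'}-x_{j'}\zd_{\zlma' j}\zpar/\zpar x_\zlma$, $[W_{j\zlma},\zh_r]=0$, and $[\zh_r,\zh_s]\zpe\operatorname{span}\{\zh_1,\dots,\zh_n\}$ confirm bracket closure.

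Conversely, let $Y\zpe\mathcal L(X,X_1)$. As noted before the lemma, $\{0\}\zpor\GG$ is an invariant of $X$, so $Y$ is tangent to it and $Y_0:=Y|_{\{0\}\zpor\GG}$ is a vector field on $\GG$. Since $X|_{\{0\}\zpor\GG}=V$ and $X_1|_{\{0\}\zpor\GG}=V_1(0,\_)=:W$ (using $h(0)=0$), the bracket-restriction formula gives $[V,Y_0]=[X,Y]|_{\{0\}\zpor\GG}=0$ and $[W,Y_0]=[X_1,Y]|_{\{0\}\zpor\GG}=0$, the latter being the order-$0$ consequence of the order-$1$ tangency of $[X_1,Y]$. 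As the couple $V,W$ is dense, $Y_0$ is invariant under right-translation by a dense subgroup of $\GG$ and hence by all of $\GG$, so $Y_0$ is right-invariant and equals $\zsu_r c_r\zh_r|_{\{0\}\zpor\GG}$ for unique $c_r\zpe\RR$. Replacing $Y$ by $Y':=Y-\zsu_r c_r\zh_r\zpe\mathcal L(X,X_1)$, one may henceforth assume $Y$ vanishes on $\{0\}\zpor\GG$.

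Decompose $Y=Y_H+Y_V$ with $Y_V=\zsu_r b_r(x,\g)\tilde V_r$. The flow of $X$ is $\zf_t(x,\g)=(e^tx,\g\exp(tV))$, and combining $[X,Y]=0$ with Lemma~\ref{L22} yields $b_s(e^tx,\g\exp(tV))=\zsu_r M(t)^s{}_r b_r(x,\g)$, where $M(t)=\operatorname{Ad}(\exp(-tV))$ remains in the compact $\operatorname{Ad}(\GG)\zco\GL(\G)$. Choosing $t_n\zfl-\zinf$ along a subsequence with $\exp(t_nV)$ convergent, the left-hand side tends to $0$ because $b_r|_{\{0\}\zpor\GG}=0$, while $M(t_n)$ converges to an invertible limit; hence $b_r\zeq 0$ and $Y_V=0$. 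For $Y_H=\zsu_\zlma a_\zlma(x,\g)\zpar/\zpar x_\zlma$, the relation $[X,Y_H]=0$ becomes $(\zx+V)(a_\zlma)=a_\zlma$. Taylor-expanding $a_\zlma$ in $x$, the degree-$k$ homogeneous piece $a^{(k)}_\zlma$ on $\GG$ satisfies $V(a^{(k)}_\zlma)=(1-k)a^{(k)}_\zlma$, so $a^{(k)}_\zlma(\g\exp(tV))=e^{(1-k)t}a^{(k)}_\zlma(\g)$; boundedness on the compact $\GG$ forces $a^{(k)}_\zlma\zeq 0$ whenever $k\zmai 2$.

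Any smooth flat remainder $r_\zlma$ obeys the same scaling and, for $x\znoi 0$, takes the form $r_\zlma(x,\g)=|x|\,\zr(x/|x|,\g\exp(-\log|x|V))$ with $\zr$ continuous on $S^{k-1}\zpor\GG$; flatness at $x=0$ tested along sequences $x_n=\ze_n y_0$ with $\ze_n\zfl 0^+$, together with density of the forward orbit closure of $\exp(tV)$ in the compact $\GG$, forces $\zr\zeq 0$ and hence $r_\zlma\zeq 0$. Consequently $a_\zlma(x,\g)=\zsu_i a^i_\zlma(\g)x_i$ with $V(a^i_\zlma)=0$. Finally, the horizontal part of $[X_1,Y]=[V_1,Y_H]-Y_H(h)X$ equals $\zsu_i x_i\bigl(\zsu_{r,\zlma}f_r(x)\tilde V_r(a^i_\zlma)\zpar/\zpar x_\zlma\bigr)$ modulo $O(|x|^5)$ terms stemming from $\zpar h/\zpar x_\zlma=O(|x|^4)$; the order-$1$ tangency demands the coefficient of $x_i$ at $x=0$ be vertical, giving $W(a^i_\zlma)=\zsu_r f_r(0)\tilde V_r(a^i_\zlma)=0$. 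Density of $V,W$ then forces each $a^i_\zlma$ to be constant, and $Y=\zsu_{i,\zlma}c^i_\zlma x_i\zpar/\zpar x_\zlma$ as required. The principal obstacle is precisely this elimination of the flat remainder: the Taylor analysis only kills polynomial contributions, and disposing of smooth flat solutions of $(\zx+V)(r)=r$ requires the polar-coordinate argument combined with density of the torus closure of $\exp(\RR V)$ in the compact $\GG$.
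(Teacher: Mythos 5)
Your proof is correct, and its overall skeleton (verify that the $k^2+n$ listed fields lie in $\mathcal L(X,X_1)$ and span a subalgebra, then show that every element of $\mathcal L(X,X_1)$ reduces to a combination of them) matches the paper's. But the mechanism you use for the converse inclusion is genuinely different. The paper expands the vertical part of $Y$ in the \emph{right}-invariant frame $\{\zh_r \}$, so that $[X,Y]=0$ forces the coefficients $\zf_r$ to be first integrals of $X$; since every $X$-trajectory has its $\za$-limit in $\{0\}\zpor\GG$, where density of the couple makes the $\zf_r$ constant, continuity gives global constancy in one stroke. You instead subtract the right-invariant part read off at $\{0\}\zpor\GG$ and kill the residual vertical component by an $\operatorname{Ad}$-cocycle/compactness argument in the left-invariant frame. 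For the horizontal coefficients the paper again avoids any Taylor analysis: it restricts to vector lines, factors $f=x\zf$ with $X\zpu\zf=0$, and reuses the same $\za$-limit argument, so the polynomial/flat dichotomy that you identify as ``the principal obstacle'' never arises. Your route --- killing the homogeneous pieces of degree $\znoi 1$ by exponential growth versus boundedness on the compact group, then disposing of the flat remainder by the polar-coordinate/density argument --- is valid and self-contained, at the cost of being more computational; the paper's single device (constancy along trajectories combined with $\za$-limits contained in a set where density of the couple applies) is the more economical one and is worth internalizing, as it is reused both in the reduction to the case $k=1$ and elsewhere in the paper.
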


\begin{proof}
First note that, for each $Y\zpe\mathcal{L}(X,X_1 )$,
$[X_1 ,Y]$ is tangent to $\{0\}\zpor\GG$ at order $1$ if and only if $[V_1 ,Y]$ is so. Therefore, we shall prove the result assuming this last hypothesis.

Let $Y=\zsu_{j=1}^k f_j (x,\g)\zpar/\zpar x_j
+\zsu_{r=1}^n \zf_r (x,\g)\zh_r\in\mathcal L(X,X_1 )$.
Then $X$ and $Y$ commute, and $Y$ has to be tangent to $\{0\}\zpor\GG$,
thus $f_j (0,\g)=0$, for every $\g\zpe\GG$ and every $j=1,\dots,k$.

Since $V,V_1$ are left-invariant, then $X$, $V$, and $V_1$ commute with $\zh_1 ,\dots,\zh_n$.
Now, from $[X,Y]_{\zbv\{0\}\zpor\GG} =[V_1 ,Y]_{\zbv\{0\}\zpor\GG}=0$ it follows that
$V\zpu\zf_r$ and $V_1 \zpu\zf_r$, $r=1,\dots,n$, vanish on $\{0\}\zpor\GG$. But
$V,V_1$ is a dense couple so each $\zf_r$ is constant on $\{0\}\zpor\GG$.

A computation shows that
$$[X,Y]=\widetilde Y +\zsu_{r=1}^n (X\zpu\zf_r )\zh_r$$
where $\widetilde Y$ is a functional combination of $\zpar/\zpar x_1 ,\ldots,\zpar/\zpar x_k$.
Hence $X\zpu\zf_r =0$, i.e.\ $\zf_r$ is constant along the  $X$-trajectories. But the
$\za$-limits of $X$-trajectories are included in $\{0\}\zpor\GG$, set where $\zf_r$ is constant, so each
$\zf_r$, $r=1,\dots,n$, is constant.

Replacing $Y$ by $Y-\zsu_{r=1}^n\zf_r \zh_r$ allows us to assume
$Y=\zsu_{j=1}^k f_j (x,\g)\zpar/\zpar x_j $ where each $f_j$ vanishes on $\{0\}\zpor\GG$.

In turn from $[X,Y]=0$ it follows $X\zpu f_j =f_j$, $j=1,\dots,k$.

On the other hand
$$[V_1 ,Y]=\zsu_{j=1}^k (V_1 \zpu f_j )\zpar/\zpar x_j +W$$
where $W$ is vertical. Thus $V_1 \zpu f_j =0$, $j=1,\dots,k$,  on  $\{0\}\zpor\GG$
since $[V_1 ,Y]$ is tangent to this submanifold. Moreover as $[V_1 ,Y]$ is tangent to
$\{0\}\zpor\GG$ at order $1$, $U\zpu(V_1 \zpu f_j )=0$, $j=1,\dots,k$, on
 $\{0\}\zpor\GG$ for every vector field
 $U=\zsu_{\zlma=1}^k a_\zlma \zpar/\zpar x_\zlma$ with $a_1 ,\dots,a_k \zpe\RR$.

 In order to finish the proof, we must show that every $f_j$ is independent
 of $\g$ and linear on $x$. In other words, given $f\co\RR^k \zpor\GG\zfl\RR$ such that
 $f(\{0\}\zpor\GG)=(V_1 \zpu f)(\{0\}\zpor\GG)=0$, $X\zpu f=f$ and
 $(U\zpu(V_1 \zpu f_j ))(\{0\}\zpor\GG)=0$ for every constant vector field $U$,
 we need to prove that $f$ is independent of $\g$ and linear on $x$.

First, we consider the case $k=1$. Since $f$ vanishes on $\{0\}\zpor\GG$, then $f=x\zf$ for some function $\zf$. Now $X\zpu f=f$ becomes $X\zpu\zf=0$,
which implies that $\zf$ is constant along the $X$-trajectories.

Set $U=\zpar/\zpar x$. Then
$${\frac {\zpar(V_1 \zpu f)} {\zpar x}}=V_1 \zpu\zf
+x{\frac {\zpar(V_1 \zpu\zf)} {\zpar x}}$$
vanishes on $\{0\}\zpor\GG$, hence  $V_1 \zpu\zf$ does so.

As $X=V$ on $\{0\}\zpor\GG$, and the couple $V,V_1 (0,\_)$ is dense, then
$\zf$ must be constant on $\{0\}\zpor\GG$. But the $\za$-limits  of all the $X$-trajectories are
included in $\{0\}\zpor\GG$, so $\zf$ is constant and $f=ax$ for some $a\zpe\RR$. That is, $f$ is independent of $\g$ and linear on $x$, what concludes the case $k=1$.


We consider now the case $k\zmai 2$. Let $E$ be any vector line in $\RR^k$, thus $E\cong \RR$. Since:
\begin{itemize}
\item $X$ and $V_1$ are tangent to $E\zpor\GG$,
\item the restriction of $\zx$ to $E$ is still the radial vector
field
\item and one may choose $U$ to be a constant vector field tangent to $E$,
\end{itemize}
we apply the $1$-dimensional case above to get that $f\co E\zpor\GG \zfl\RR$ is independent of $\g$ and linear on $E$

Finally, as the union of all the vector lines $E$ equals $\RR^k$, it follows that
$f$ is independent of $\g$ and linear on $\RR^k$. Indeed, clearly $f$ is homogeneous
of degree 1 and therefore linear (see Remark \ref{Nuevo1} below). 
\end{proof}

\begin{remark}\mylabel{Nuevo1}
Let $A\subset\RR^k$ be an open ball centered at the origin and radius $r\in(0,\infty]$.
If $\zf\colon A \zfl\RR$ is homogeneous of degree $d$ then $\zpar\zf/\zpar x_1 ,
\dots,\zpar\zf/\zpar x_k$ are homogeneous of degree $d-1$. Therefore, when $d=1$
the partial derivative  $\zpar\zf/\zpar x_1 ,\dots,\zpar\zf/\zpar x_k$ are constant
and $\zf$ has to be linear.
\end{remark}

\begin{proposition}\mylabel{P32}
Let $F\co\RR^k \zpor\GG \zfl\RR^k \zpor\GG$ be an automorphism of $X=\zx+V$ that
preserves $X_1 =V_1 +(h\zci\zp_1 )X$ on an open neighborhood of  $\{0\}\zpor\GG$,
that is, there exists $A$, an open set neighborhood of $\{0\}\zpor\GG$,
 such that $F_* X_1 (x,\g)=X_1 \big(F(x,\g)\big)$
for any $(x,\g)\zpe A$. If $V,V_1 (0,\_)$ is a dense couple, then there exist
an isomorphism $\zf\co\RR^k \zfl\RR^k$ and an element $\boldsymbol{\zl}\zpe\GG$ such that
$$F(x,\g)=(\zf(x),\boldsymbol{\zl}\g)$$
for all $(x,\g)\zpe\RR^k \zpor\GG$.
\end{proposition}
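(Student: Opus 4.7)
The plan is to first determine $F|_{\{0\}\times\GG}$, reduce to the case where that restriction is the identity, and then apply Lemma~\ref{L31} to rigidify $F$ elsewhere. The submanifold $\{0\}\times\GG$ coincides with the set of points of $\RR^k\times\GG$ whose $X$-trajectory has compact closure (as noted just before Lemma~\ref{L31}), so it is preserved by $F$. Since $j^{4}_{0}h=0$ gives $h(0)=0$, the restrictions of $X$ and $X_1$ to this submanifold are the left-invariant vector fields $V$ and $V_1(0,\_)$. Hence $F|_{\{0\}\times\GG}$ commutes with the right translations $R_{\exp(tV(\e))}$ and $R_{\exp(sV_1(0,\e))}$, and the closure in $\GG$ of the subgroup these one-parameter subgroups generate is a closed Lie subgroup whose Lie algebra contains $V(\e)$ and $V_1(0,\e)$, hence the subalgebra they span, hence the corresponding connected Lie subgroup, which is already dense in $\GG$ by the dense couple hypothesis. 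Therefore $F|_{\{0\}\times\GG}$ commutes with every $R_g$ and so equals $L_{\boldsymbol\lambda}$ for $\boldsymbol\lambda:=\pi_2(F(0,\e))$. The map $\Lambda(x,g):=(x,\boldsymbol\lambda^{-1}g)$ preserves $\xi$, $V$, $V_1$ and $h\circ\pi_1$, and so preserves both $X$ and $X_1$; replacing $F$ by $G:=\Lambda\circ F$, we may assume $G|_{\{0\}\times\GG}=\Id$. Writing $G=(\alpha,\beta)$, commutation of $G$ with the flow $(e^tx, g\exp(tV(\e)))$ of $X$ gives $\beta(e^tx,g\exp(tV(\e)))=\beta(x,g)\exp(tV(\e))$; letting $t_n\to-\infty$ along a sequence with $g\exp(t_nV(\e))\to g_\infty$ (possible since $\GG$ is compact) and using $\beta(0,g_\infty)=g_\infty$ forces $\beta(x,g)=g$ for every $(x,g)$.

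Next, $G_*$ preserves the Lie algebra $\mathcal L(X,X_1)$ of Lemma~\ref{L31}: for $Y\in\mathcal L(X,X_1)$ one has $[X,G_*Y]=G_*[X,Y]=0$ globally, while $[X_1,G_*Y]=G_*[X_1,Y]$ on the neighborhood where $G_*X_1=X_1$, and being tangent to $\{0\}\times\GG$ at order $1$ is a local condition at that submanifold, preserved by any diffeomorphism fixing it setwise. The $\mathfrak{gl}(k,\RR)$-summand $\mathrm{span}\{x_j\,\partial/\partial x_\lambda\}$ is intrinsically characterized as $\{Y\in\mathcal L(X,X_1):Y|_{\{0\}\times\GG}=0\}$, hence preserved by $G_*$; moreover $G|_{\{0\}\times\GG}=\Id$ forces $G_*\eta_r=\eta_r+W_r$ with $W_r=\sum_{j,\lambda}A_{r,j\lambda}\,x_j\,\partial/\partial x_\lambda$ for each $r$.

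To obtain $W_r=0$, combine $[X_1,\eta_r]=0$ (globally, since $\eta_r$ commutes with $V_1$ as a right-invariant vs.\ fiberwise left-invariant pair and $\eta_r h=0$) with $G_*X_1=X_1$ near $\{0\}\times\GG$ to get $[X_1,W_r]=0$ there. Separating horizontal and vertical parts of this bracket yields, in particular, $W_r h\equiv 0$ on a neighborhood of $0\in\RR^k$. Since $W_r$ preserves the degree of homogeneous polynomials and $h=h_5+O(|x|^6)$ with $h_5\ne 0$ by the jet conditions, we get $W_r h_5=0$; choosing $h$ so that $h_5$ has trivial stabilizer in $\mathfrak{gl}(k,\RR)$---e.g.\ $h_5=x_1^5+\cdots+x_k^5$---forces $W_r=0$ for every $r$. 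Then $G_*\eta_r=\eta_r$ translates, via the formula for the push-forward applied to the s-vertical field $\eta_r$, into $\partial\alpha/\partial g\equiv 0$; so $\alpha=\phi(x)$, and applying $G_*X=X$ to the horizontal component gives $d\phi(x)\cdot x=\phi(x)$, whence $\phi$ is homogeneous of degree $1$ and so linear by Remark~\ref{Nuevo1}. Undoing the reduction, $F(x,g)=(\phi(x),\boldsymbol\lambda g)$. The main obstacle is this last step: squeezing $W_r=0$ out of the local equation $G_*X_1=X_1$ depends on the fine structure of $h$ at $0$, so an appropriate choice of $h$ beyond merely $j^{4}_{0}h=0\ne j^{5}_{0}h$ (or, alternatively, the companion constraints $W_r v_s\equiv 0$, where $V_1=\sum v_s\zeta_s$ in a basis of left-invariant fields) must be invoked.
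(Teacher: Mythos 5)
Your route is genuinely different from the paper's: the paper first uses the structure of $\mathcal L(X,X_1)$ to show that $F$ preserves both product foliations (characterizing $\mathcal L_1$ by the existence of zeros and recovering $\mathcal L_2$ via the compactness of trajectory closures), writes $F=(\zf,\zq)$, and then identifies $\zq$ as a left translation through Lemma \ref{L24} and the dense couple. You instead pin down $F$ on $\{0\}\zpor\GG$ first, normalize, and use a limit along the flow of $X$ to get $\zb(x,\g)=\g$. Those steps, as well as the verification that $G_*$ preserves $\mathcal L(X,X_1)$ and $\mathcal L_1$ and sends each $\zh_r$ to $\zh_r+W_r$ with $W_r\zpe\mathcal L_1$, are sound.

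The genuine gap is the step $W_r=0$, as you yourself flag. You reduce it to $W_rh_5=0$ and then need the degree-$5$ jet $h_5$ to have trivial stabilizer in $\mathfrak{gl}(k,\RR)$; but the proposition assumes only $j^4_0h=0\znoi j^5_0h$, and for a compactly supported $h$ whose $5$-jet is, say, $x_1^5$ the stabilizer is a large subalgebra, so the argument does not close under the stated hypotheses. The missing observation is that you have not yet used that $G_*$ is a Lie algebra \emph{automorphism}: from $G_*\mathcal L_1=\mathcal L_1$ and $[\mathcal L_1,\zh_r]=0$ you get $[\mathcal L_1,\zh_r+W_r]=0$, hence $[\mathcal L_1,W_r]=0$, so $W_r=b_r\zx$ lies in the center of $\mathfrak{gl}(k,\RR)$. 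Your own identity $W_rh=0$ near the origin then reads $b_r\,\zx h=0$; if $b_r\znoi 0$ then $h$ is constant along rays near $0$, and $h(0)=0$ forces $h\zeq 0$ there, contradicting $j^5_0h\znoi 0$. (The paper kills $b_r$ differently, by noting that trajectories of $\zh_r+b_r\zx$ with $b_r\znoi 0$ fail to have compact closure, whereas those of elements of $F_*\mathcal L_2$ must.) With this supplement your proof is complete and requires no extra hypothesis on $h$.
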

\begin{proof}
The diffeomorphism $F$ induces a Lie algebra automorphism of $\mathcal L(X,X_1)$.
Let $\mathcal L_1$ denote the ideal with basis $\{x_j \zpar/\zpar x_\zlma\,|\, j,\zlma=1,\dots,k\}$, while $\mathcal L_2$ denotes the ideal with basis $\{\zh_1 ,\dots,\zh_n \}$.
Then $[\mathcal L_1 ,\mathcal L_2 ]=0$ and
$\mathcal L (X,X_1 )=\mathcal L_1 \zdi\mathcal L_2$.

Note that $Y\zpe\mathcal L (X,X_1 )$ has a zero if and only if $Y\zpe\mathcal L_1$.
Thus $\mathcal L_1$ is an invariant of $F$. Moreover, $\mathcal L_1$ gives rise to the
foliation $\g=\rm constant$ (first define it on $(\RR^k -\{0\})\zpor\GG$ and then extend it
by continuity to $\RR^k \zpor\GG$). Of course this foliation is an invariant of $F$.

On the other hand, $F$ has to map $\mathcal L_2$ to an ideal $\mathcal L'_2$
such that $\mathcal L (X,X_1 )=\mathcal L_1 \zdi\mathcal L'_2$ and
 $[\mathcal L_1 ,\mathcal L'_2 ]=0$. Therefore, there exist
 $Z_1 ,\dots,Z_n \zpe\mathcal L_1$ such that
 $\{\zh_1 +Z_1 ,\dots,\zh_n +Z_n \}$ is a basis of $\mathcal L'_2$ and
 $[Z_r ,\mathcal L_1]=0$, $r=1,\dots,n$. This implies that $Z_r =b_r \zx$, $b_r \zpe\RR$, since the center of $\mathcal L_1$ is spanned by $\zx$.

But the closure of the trajectories of any element in $\mathcal L_2$ is compact,
hence the closure of the trajectories of any element in $\mathcal L'_2$ has to be so.
In the case of $\zh_r +Z_r$, this last assertion is true only if $Z_r =0$.
In short $\mathcal L'_2 =\mathcal L_2$ and $\mathcal L_2$ is an invariant of $F$.
Consequently, the foliation $x=\rm constant$ associated to $\mathcal L_2$ is also
an invariant of $F$.

From the invariance of the foliations associated to $\mathcal L_1$ and $\mathcal L_2$
respectively, that is the foliations given by the factors of $\RR^k \zpor\GG$,
it follows the existence of two map $\zf\co\RR^k \zfl\RR^k$
and $\zq\co\GG \zfl\GG$ such that $F(x,\g)=(\zf(x),\zq(\g))$.

As $F$ preserves $X$,  $\zf$ must preserve $\zx$, which implies that $\zf$
is linear. Obviously $\zf$ is a bijection hence an isomorphism.

In turn $\zq$ induces a Lie algebra automorphism of the ideal $\mathcal L_2$, that is
of the Lie algebra of right invariant vector fields. Moreover, composing $\zq$ on the left with
$L_{\boldsymbol{\zl}^{-1}}$, where $\boldsymbol{\zl}=\zq(\e)$, one may suppose $\zq(\e)=\e$. In this case by Lemma
\ref{L24} $\zq$ is a Lie group isomorphism. Therefore, $\zq$ induces an isomorphism of $\G$.

As $F_* X=X$, $F_* X_1 =X_1$ around $\{0\}\zpor\GG$, and $X=V$ and
$X_1 =V_1 (0,\_)$ on $\{0\}\zpor\GG$, one concludes that $\zq_* V=V$ and
$\zq_* V_1 (0,\_)=V_1 (0,\_)$.

Let $\mathcal H$ be the Lie subalgebra of $\G$ spanned by $V,V_1 (0,\_)$, and
$\mathbf H$ the connected Lie subgroup of $\GG$ corresponding to $\mathcal H$.
Clearly $\zq_*Y=Y$ for all $Y\zpe\mathcal H$, so $\zq$ equals the identity on
$\mathbf H$. Finally, since $V;V_1 (0,\_)$ is a dense couple $\overline{\mathbf H} =\GG$,
hence $\zq=\Id$, and the result follows.
\end{proof}

\begin{remark}\mylabel{R33}
Notice that if $F$ is like in Proposition \ref{P32} and  $\zf$ is shown to be a multiple of the identity,
then $\zf=\Id$.

Indeed, as $F$ preserves the couple $X,X_1$ around $\{0\}\zpor\GG$,  then close to the
origin $\zf$ has to preserve the function $h\co\RR^k \zfl\RR$. Set $\zf=a\Id$.
Consider a vector line $E$ in $\RR^k$ such that the jet of order $5$ at the origin
of $h_{\zbv E}$ does not vanish. Endow $E$ with the coordinate $y$. Then
$h_{\zbv E}(y)=y^5 \tilde h(y)$ where $\tilde h(0)\znoi 0$.

As $h$ is $\zf$-invariant near the origin one has $h_{\zbv E}(ay)=h_{\zbv E}(y)$, hence
$(ay)^5 \tilde h(ay)=y^5 \tilde h(y)$; finally computing the fifth derivative at the origin
yields $5!\,a^5 \tilde h(0)=5!\,\tilde h(0)$, so $a=1$.

Note that the support of $h$ around the origin can be taken as small as desired. Even more,
$h$ can be replaced by $hh_1$ in the expression $X_1 =V_1 +hX$ provided
that $h_1 (0)\znoi 0$.
\end{remark}

Let $A\subset \RR^k$ be open, and $W$ be a vertical vector field defined on $A\zpor\GG$. It what follows we say that $W$ is {\em left (respectively right) invariant} if each
$W(x,\_)$, $x\zpe A$,  is left (right) invariant.

\begin{lemma}\mylabel{L34}
Let $\widetilde X$ be the vector field on $\RR^k \zpor\GG$ given by $\widetilde X=\widetilde\zx
+\widetilde V$ where $\widetilde\zx=\zsu_{j=1}^k f_j (x)\zpar/\zpar x_j $,
and $\widetilde V$ is a vertical left-invariant vector field. Assume that:

\begin{enumerate} [label={\rm (\arabic{*})}]
\item On $\RR^k$ it holds:
\begin{enumerate}[label={\rm (\alph{*})}]
\item $\widetilde\zx$ is complete.
\item $\widetilde\zx(0)=0$ and  its linear part at the origin is a positive multiple of identity.
\item The outset of the origin equals $\RR^k$, that is, the $\za$-limit of every trajectory
of $\widetilde\zx$ is the origin.
\end{enumerate}
\item
There is an abelian subalgebra $\G'\zco\G$ such that $\widetilde V (x,\_)\zpe\G'$
for every  $x\zpe\RR^k$.			
\end{enumerate}

Then there exist a diffeomorphism $F\co\RR^k \zpor\GG \zfl\RR^k \zpor\GG$, $b\zpe\RR^+$,
and $V\zpe\G$ such that:

\begin{enumerate} [label={\rm (\roman{*})}]
\item  $F$ commutes with the (natural) left action of $\GG$ on $\RR^k \zpor\GG$
and $V=\widetilde V (0,\_)$.
\item $F_* \widetilde X=b\zx+V$ when $V$ is thought as an s-vertical vector field.
\end{enumerate}
\end{lemma}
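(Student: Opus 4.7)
My plan is to construct $F$ as a composition $F=F_1\zci F_0$: first $F_0$ straightens $\widetilde\zx$ to the radial field $b\zx$, and then $F_1$ brings the remaining vertical part into the constant (s-vertical) form $V=\widetilde V(0,\_)$. Both $F_0$ and $F_1$ will commute with the left $\GG$-action---$F_0$ because it acts only on the $\RR^k$ factor, and $F_1$ because it is right multiplication on the $\GG$ factor, which commutes with left multiplication.

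\textbf{Global linearization of $\widetilde\zx$.} All eigenvalues of the linear part of $\widetilde\zx$ at $0$ equal $b>0$, so no resonance $\zl_i=\zsu_j m_j \zl_j$ with $\zsu_j m_j\zmai 2$ can hold (this would force $\zsu_j m_j=1$). Sternberg's $C^\infty$ linearization theorem therefore gives a local diffeomorphism $\zq_0$ on a neighborhood of $0$ in $\RR^k$ with $(\zq_0)_*\widetilde\zx=b\zx$. Using the completeness of $\widetilde\zx$ and the fact that the outset of $0$ equals $\RR^k$, I would extend $\zq_0$ to a global diffeomorphism $\zq\co\RR^k\zfl\RR^k$ by
$$\zq(x)=e^{-bt}\zq_0(\widetilde\zF_t(x))$$
for any $t$ negative enough that $\widetilde\zF_t(x)$ lies in the domain of $\zq_0$ (here $\widetilde\zF_t$ denotes the flow of $\widetilde\zx$). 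This is independent of the choice of $t$ since $\zq_0$ locally conjugates the flows, and satisfies $\zq_*\widetilde\zx=b\zx$, $\zq(0)=0$. Setting $F_0(x,\g)=(\zq(x),\g)$, one has $F_{0*}\widetilde X=b\zx+\widehat V$ with $\widehat V$ still vertical, fibrewise left-invariant, $\G'$-valued, and $\widehat V(0,\_)=\widetilde V(0,\_)$.

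\textbf{Vertical adjustment.} Next, seek $\zf\co\RR^k\zfl\G'$ and set $F_1(x,\g)=(x,\g\zpu\exp(\zf(x)))$. By Lemma \ref{L23}, $F_{1*}(b\zx)=b\zx+V^\zf$ where $V^\zf(x,\_)$ is the left-invariant field with $V^\zf(x,\e)=b\zsu_j x_j\,\zpar\zf/\zpar x_j$. Since $\G'$ is abelian, $\operatorname{Ad}(\exp(-\zf(x)))$ is the identity on $\G'$; Lemma \ref{L22} then shows that right translation by $\exp(\zf(x))$ preserves every $\G'$-valued left-invariant vector field, so $F_{1*}\widehat V=\widehat V$. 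Thus the target identity $F_{1*}(b\zx+\widehat V)=b\zx+V$ (with $V=\widetilde V(0,\_)$ viewed as s-vertical) reduces to the Euler-type equation
$$b\zsu_{j=1}^k x_j\,\frac{\zpar\zf}{\zpar x_j}(x)=\widetilde V(0,\e)-\widehat V(x,\e).$$
The right-hand side $h(x)$ is smooth, $\G'$-valued, and vanishes at $0$, so Hadamard's lemma writes $h(y)=\zsu_j y_j H_j(y)$ with $H_j$ smooth. Then $\zf(x)=-b^{-1}\zil_0^\infty h(e^{-u}x)\,du$ converges to a $C^\infty$, $\G'$-valued solution (the integrand and its derivatives of all orders are $O(e^{-u})$), and a direct check shows it satisfies the equation. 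Taking $F=F_1\zci F_0$ completes the argument.

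The main technical obstacle is the global linearization in the first step, which relies essentially on the no-resonance and outset hypotheses. The vertical adjustment is a routine application of Lemma \ref{L23} combined with solving an elementary transport equation along the trajectories of $\zx$.
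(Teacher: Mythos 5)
Your argument follows essentially the same route as the paper's: reduce to $\widetilde\zx=b\zx$ by (a globalization of) the Sternberg linearization theorem, then correct the vertical part with the fibrewise right translation $F_1(x,\g)=(x,\g\cdot\exp(\zf(x)))$ via Lemmas \ref{L22} and \ref{L23}, where $\zf$ solves $(\zf_*(b\zx))(x)=\widetilde V(0,\e)-\widehat V(x,\e)$ --- the paper simply cites Lemma 2.1 of \cite{TV2} for this step, whereas you solve the transport equation explicitly. The only slip is a sign in that explicit solution: it should be $\zf(x)=+\,b^{-1}\int_0^\infty h(e^{-u}x)\,du$, since differentiating $\zf(e^t x)$ at $t=0$ with your choice of sign yields $b\,(\zx\cdot\zf)=-h$ rather than $+h$.
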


\begin{proof}
The first part of the proof of Lemma 2.4 in \cite{TV2} (see also \cite{SST} and 
\cite[Proposition 2.1]{TV1}) allows us to assume $\widetilde\zx=\zx$.
(Note that this fact is essentially a consequence of the Sternberg  linearization
theorem stated as Theorem \ref{Nuevo2} just after the end of this proof.)

Define $\zq\co\RR^k \zfl\G'\zco\G\zeq T_\e \GG$ by setting
$\zq(x)=\widetilde V (x,\e)$. By Lemma 2.1 in \cite{TV2} there is
$\zf\co\RR^k \zfl\G'$ such that $(\zf_* \zx)(x)=\zq(0)-\zq(x)$, $x\zpe\RR^k$.

Now consider the diffeomorphism $F\co\RR^k \zpor\GG \zfl\RR^k \zpor\GG$ given by
$F(x,\g)=(x,\g\zpu \exp(\zf(x)))$. By Lemma \ref{L23}, $F_* \zx=\zx+\zh$ where $\zh$ is
a vertical left-invariant vector field with initial condition
$$\zh(x,\e)=(\zf_* \zx)(x)=\zq(0)-\zq(x),\quad x\zpe\RR^k .$$

Moreover, since $\G'$ is abelian then $F_* \widetilde V=\widetilde V$ by Lemma \ref{L22}.
Therefore:
$$F_* (\zx+\widetilde V)(x,\e)=\zx(x)+\zq(0)-\zq(x)+\widetilde V(x,\e)=\zx(x)+\zq(0).$$
\end{proof}

\begin{remark}\mylabel{R35}
Observe that if $\widetilde X$ matches the hypotheses of Lemma \ref{L34}, then $\widetilde X'=(h\zci\zp_1 )\widetilde\zx + (h_1 \zci\zp_1 )\widetilde V$, where
$h\co\RR^k \zfl\RR$ is a positive and bounded function and any $h_1 \co\RR^k \zfl\RR$, fulfills the hypotheses of Lemma \ref{L34} too.
\end{remark}

We will now rephrase the classical Sternberg linearization theorem \cite[Theorem 1]{SST} within the context of vector fields.

\begin{theorem}[Sternberg linearization theorem]\mylabel{Nuevo2}
Let $0\zpe D\zco\RR^n$ be an open set, and let $X=X_0 +X_1$  be a vector field on $D$ where
$$X_0 =\zsu_{i,j=1}^n a_{ij}   x_i  \frac{\zpar}  {\zpar x_j}, \,\, a_{ij}\zpe\RR,\quad
\text{and}  \quad  X_1 =\zsu_{j=1}^n f_j  \frac{\zpar}  {\zpar x_j} $$
with $f_j (0)=0$, $j=1,\dots,n$, and $(\zpar f_j /\zpar x_i )(0)=0$, $i,j=1,\dots,n$.
Let $\zl_1,\dots,\zl_n$ be the eigenvalues of the square matrix $(a_{ij})$ defined by the coefficients of $X_0$. Then if
$$\zl_j \znoi \zsu_{r=1}^n k_r \zl_r$$
for every $j=1,\dots,n$ and every family $k_1 ,\dots, k_n$ of non-negative integers
such that ${k_1 +\dots +k_n >1},$ there exist two open sets $0\zpe D_1 \zco D$, $0\zpe D_2 \zco D$ and
a diffeomorphism  $F\co D_1 \zfl D_2$ such that $F(0)=0$ and $F_* X=X_0$.
\end{theorem}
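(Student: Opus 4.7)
Since this is a restatement of Sternberg's classical theorem, the plan is to follow his three-step strategy: formal linearization by killing non-resonant terms, smooth realization via Borel's theorem, and removal of the resulting flat error by a path method.

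First I would construct a formal power series $\hat F = \Id + \zsu_{k\zmai 2} \hat F_k$ tangent to the identity, with each $\hat F_k$ a degree-$k$ homogeneous polynomial map, such that $\hat F_* X = X_0$ holds as a formal identity at the origin. Comparing terms of degree $k$ in the conjugation equation yields, for each $k\zmai 2$, a cohomological equation
$$[X_0 ,\hat F_k] = R_k ,$$
where $R_k$ is a degree-$k$ polynomial determined by $X_0$, $X_1$, and $\hat F_2 ,\dots,\hat F_{k-1}$. In the monomial basis $\{x^\alpha \zpar/\zpar x_j\}_{|\alpha|=k}$ the operator $\operatorname{ad}(X_0)$ acts diagonally with eigenvalues $\zsu_r \alpha_r \zl_r -\zl_j$, and the non-resonance hypothesis is exactly the assertion that all these eigenvalues are nonzero when $|\alpha|\zmai 2$. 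Hence $\hat F_k$ can be solved uniquely at every stage, producing the desired formal series.

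Next, Borel's theorem provides a $C^\infty$ map $F$ defined on a neighborhood $0\zpe D_1 \zco D$ with $F(0)=0$ whose Taylor expansion at $0$ coincides with $\hat F$. Because its linear part is the identity, shrinking $D_1$ if needed I may assume $F\co D_1 \zfl D_2$ is a diffeomorphism onto its image. By construction the vector field $W := F_* X - X_0$ is $C^\infty$ and flat at the origin.

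The main obstacle is the final step, eliminating $W$. The plan is a deformation argument: interpolate $X_t = X_0 + tW$ for $t\zpe [0,1]$ and seek a time-dependent vector field $Y_t$, flat at the origin, whose time-$1$ flow $\zQ$ satisfies $\zQ_* X_1 = X_0$; this reduces to solving the cohomological equation $[X_t ,Y_t] = -W$ near $0$. The hyperbolic character of the linear part of $X_0$ (which is what makes the theorem directly useful in the context of Lemma \ref{L34}, where $X_0$ is a positive multiple of the identity) is decisive here: after spectral decomposition, $\operatorname{ad}(X_0)$ becomes invertible on spaces of flat vector fields by integration along the trajectories of $X_0$, and the flatness of $W$ guarantees convergence of the relevant integrals. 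Composing $F$ with the resulting $\zQ$ then yields the required diffeomorphism conjugating $X$ to $X_0$ on a small neighborhood of $0$. This flat-correction step is the technical heart of Sternberg's theorem and the place where the argument does its real work.
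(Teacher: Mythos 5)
The paper does not prove this statement: it is presented as a restatement of a classical theorem and is simply cited from Sternberg's 1958 paper \cite{SST}, so there is no internal proof to compare your argument against. Judged on its own terms, your outline is the standard three-step proof (formal linearization by inverting $\operatorname{ad}(X_0)$ on non-resonant homogeneous terms, Borel realization, removal of the flat remainder), and each step is the right one. Two points deserve attention. First, your final step invokes ``the hyperbolic character of the linear part of $X_0$,'' but hyperbolicity is not an explicit hypothesis of the statement; you should observe that it is forced by the non-resonance condition, since a real matrix with a zero eigenvalue $\zl_s$ gives the resonance $\zl_j =\zl_j +\zl_s$ (with $k_j =k_s =1$), and a purely imaginary pair $\zl_s ,\overline{\zl_s}$ gives $\zl_j =\zl_j +\zl_s +\overline{\zl_s}$, both with $k_1 +\dots +k_n >1$. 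Second, the phrase ``invertible on spaces of flat vector fields by integration along the trajectories of $X_0$'' is too quick in the general hyperbolic case: a single direction of integration does not converge when eigenvalues of both signs of real part are present, and one must split the flat remainder according to the stable/unstable decomposition and integrate forward in time on one piece and backward on the other (or, as Sternberg does, first linearize along the invariant manifolds). In the only case the paper actually uses (Lemma \ref{L34}, where $X_0$ is a positive multiple of the identity) this subtlety disappears, but for the theorem as stated it is where the real work lies, and your sketch acknowledges rather than carries out that work.
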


Note that given any manifold S then the projection map $\zp_1 \co S\zpor\GG\zfl S$ is  a $\GG$-principal fibre
bundle for the natural left $\GG$-action on $S\zpor\GG$.

Consider a connection $\widetilde\D$ on a $\GG$-principal fibre bundle $\zp\co P\zfl Q$
and a point $q\zpe Q$. One will say that $\widetilde\D$ is a {\em product around}
$\zp^{-1}(q)$ if there are an open set $q\zpe A\zco Q$ and a fibre bundle
isomorphism (over $\Id\co A\zfl A$) between
$\zp\co\zp^{-1}(A)\zfl A$ and  $\zp_1 \co A\zpor\GG\zfl A$ such that $\widetilde\D$,
regarded on $\zp_1 \co A\zpor\GG\zfl A$, is given by
$\widetilde\D (u,\g)=T_u A\zpor\{0\}\zco T_{(u,\g)}$.

\begin{lemma}\mylabel{L36}
Let $\zp\co E\zfl\RR^k$ be a $\GG$-principal fibre bundle endowed with a connection
$\D$. Let $\zx'$ be  the lift of $\zx$ to $E$ by means of $\D$. If around $\zp^{-1}(0)$
the connection $\D$ is a product then there exists a fibre bundle isomorphism
(over the identity) $F\co E\zfl\RR^k \zpor\GG$
such that $F_* \zx'=\zsu_{j=1}^k x_j \zpar/\zpar x_j$.
\end{lemma}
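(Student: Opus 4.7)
The plan is to globalize the given product trivialization near $\zp^{-1}(0)$ by transporting it along the flow of $\zx'$. By hypothesis there exist an open set $0\zpe A\zco\RR^k$ and a fibre bundle isomorphism $\zq\co\zp^{-1}(A)\zfl A\zpor\GG$ over $\Id_A$ under which $\D$ becomes the trivial connection; shrinking $A$, I may assume it is an open ball around $0$. Under $\zq$ the lift $\zx'$ corresponds to the s-horizontal vector field $\zx=\sum_{j=1}^k x_j\,\zpar/\zpar x_j$ on $A\zpor\GG$, whose flow $((u,\g),s)\mapsto (e^s u,\g)$ preserves the $\GG$-factor. The base flow $\phi_s(x)=e^s x$ is complete on $\RR^k$, and since parallel transport along any smooth curve in the base of a principal bundle is globally defined, $\zx'$ is complete on $E$; its flow $\widetilde\phi_s$ is $\GG$-equivariant.

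I would then define $F\co E\zfl\RR^k\zpor\GG$ by
\[
F(p)=\bigl(\zp(p),\ \mathrm{pr}_2(\zq(\widetilde\phi_s(p)))\bigr)
\]
for any $s\zmei 0$ small enough that $e^s\zp(p)\zpe A$. Well-definedness is the heart of the argument: if $s'\zmei s\zmei 0$ both satisfy this condition, then $e^r\zp(p)\zpe A$ for every $r\zpe [s',s]$, because $A$ is a ball centered at $0$ and $|e^r\zp(p)|\zmei|e^s\zp(p)|$. Consequently the backward trajectory $r\mapsto \widetilde\phi_r(p)$ stays in $\zp^{-1}(A)$ on $[s',s]$, where it reads via $\zq$ as $r\mapsto(e^r\zp(p),\g_0)$ for a single $\g_0\zpe\GG$. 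Hence $\mathrm{pr}_2(\zq(\widetilde\phi_s(p)))=\g_0$ is independent of the admissible choice of $s$.

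Finally, I would check that $F$ is a smooth $\GG$-equivariant bundle isomorphism over $\Id_{\RR^k}$ satisfying $F_*\zx'=\zx$. Smoothness and $\GG$-equivariance come from the corresponding properties of $\zq$ and of the invariant flow $\widetilde\phi_s$; an inverse is given by $(x,\g)\mapsto\widetilde\phi_{-s}(\zq^{-1}(e^s x,\g))$ for any sufficiently negative $s$, well-defined by the same monotonicity argument. The identity $F_*\zx'=\zx$ is then immediate since, by construction, $F$ carries each $\zx'$-trajectory $s\mapsto\widetilde\phi_s(p_0)$ to the $\zx$-trajectory $s\mapsto(e^s\zp(p_0),\g_0)$ on $\RR^k\zpor\GG$. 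The only genuine obstacle is the well-definedness step, whose resolution rests entirely on the monotonicity of the radial flow with respect to distance from the origin, which prevents a backward trajectory from escaping the trivializing ball.
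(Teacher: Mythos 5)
Your proof is correct and takes essentially the same approach as the paper's: both transport the product trivialization near $\pi^{-1}(0)$ along the flow of $\xi'$. The paper packages this as the construction of a global flow-invariant section (gluing the canonical section over the trivializing ball with its forward-flow extension) and then takes $F$ to be the inverse of the associated trivialization $(x,\gamma)\mapsto\gamma\cdot s(x)$, whereas you define $F$ directly by flowing each point backward into the trivializing ball; your well-definedness check via monotonicity of the radial flow is the exact counterpart of the paper's verification that the two local sections agree on their overlap.
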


\begin{proof}
Since around $\zp^{-1}(0)$
the connection $\D$ is a product then there is an open ball $B_{2a}(0)$ such that, as principal fibre bundles,
$\zp\co\zp^{-1}(B_{2a}(0))\zfl B_{2a}(0)$ is identified (over the identity)  to
$\zp_1 \co B_{2a}(0)\zpor\GG\zfl B_{2a}(0)$ while $\D$ is given by the first factor. Note that on
$B_{2a}(0)\zpor\GG$ one has $\zx'=\zsu_{j=1}^k x_j \zpar/\zpar x_j$.

Define $\zt\co B_{2a}(0)\zfl \zp^{-1}(B_{2a}(0))\zeq  B_{2a}(0)\zpor\GG$ by setting
$\zt(x)=(x,\e)$. Let $\zF'\co\RR\zpor E\zfl E$ be the flow of $\zx'$ (clearly this vector field
is complete since $\GG$ is compact). Recall that the flow of $\zx$ is $\zF(t,x)=e^t x$.

Now define $\zt'\co\RR^k \smallsetminus \overline B_a (0)\zfl E$ by setting
$$\zt'(x)=\zF'\left(\log(\| x\|-a), \zt\left({\frac {ax} {\| x\|}}\right)\right).$$

Since $\zF(\log(\| x\| -a), ax/\| x\|)=x$ and $\zx$ is the projection of $\zx'$, it follows
that $\zt'(x)\zpe\zp^{-1}(x)$; that is $\zt'$ is a section.

As $\zt$ and $\zt'$ agree on $B_{2a}(0)\smallsetminus\overline B_a (0)$ they define a global section
$s\co\RR^k \zfl E$.

By construction $s(\RR^k )$, which is a closed regular submanifold of $E$, is union of integral
curves of $\zx'$, so $\zx'$ is tangent to $s(\RR^k )$.

Consider $\zq\co\RR^k \zpor\GG\zfl E$ given by $\zq(x,\g)=\g\zpu s(x)$. Then
$\zp_1 =\zp\zci\zq$. Thus $(\zq^{-1})_* \zx'$ projects by $\zp_1$ in $\zx$.
Moreover, $(\zq^{-1})_* \zx'$ is $\GG$-invariant and tangent to $\RR^k \zpor\{\e\}$
since $\zx'$ is $\GG$-invariant and tangent to $s(\RR^k )$. Therefore, necessarily
$(\zq^{-1})_* \zx'=\zsu_{j=1}^k x_j \zpar/\zpar x_j$, and it suffices to set $F=\zq^{-1}$.
\end{proof}

\begin{lemma}\mylabel{L37}
Let $Y$ be a vector field on $\RR^k \zpor\GG$ such that $Y=\zx+W$ where $W$ is vertical and left
invariant. Given $0<a<b$ there exists a principal bundle automorphism (over the identity)
$F\co\RR^k \zpor\GG \zfl\RR^k \zpor\GG$ such that $F_* Y=\zx+\widetilde W$ where:
\begin{enumerate}[label={\rm (\alph{*})}]
\item $\widetilde W$ is vertical and left-invariant.
\item $\widetilde W(x,\_)=0$ if $x\znope B_b (0)$.
\item $F$ equals the identity on $\overline B_a(0) \zpor\GG$, and therefore
$\widetilde W(x,\_)=W(x,\_
)$ if $x\zpe\overline B_a(0)$.
\end{enumerate}
\end{lemma}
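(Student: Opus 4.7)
The plan is a Moser-type deformation along vertical, left-invariant vector fields. Fix a smooth function $\zj\co\RR^k\zfl[0,1]$ that vanishes on an open neighborhood $U$ of $\overline B_a(0)$ and equals $1$ on $\RR^k\smallsetminus B_b(0)$, and set $\widetilde W=(1-\zj\zci\zp_1 )W$; this is vertical, left-invariant, agrees with $W$ on $\overline B_a(0)\zpor\GG$, and vanishes on $(\RR^k\smallsetminus B_b(0))\zpor\GG$. Consider the interpolation $Y_s =\zx+(1-s(\zj\zci\zp_1 ))W$, $s\zpe[0,1]$, so $Y_0 =Y$ and $Y_1 =\zx+\widetilde W$. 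I seek a time-dependent vertical, left-invariant vector field $Z_s$ on $\RR^k\zpor\GG$, vanishing on $U\zpor\GG$, whose flow $F_s$ satisfies $F_{s*} Y_0 =Y_s$; the required principal bundle automorphism will then be $F=F_1$. The standard Moser argument reduces the pushforward identity to the infinitesimal equation $[Z_s ,Y_s ]=(\zj\zci\zp_1 )\,W$.

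Writing $Z_s (x,\g)=(L_\g )_* z_s (x)$ and $W(x,\g)=(L_\g )_* w(x)$ with $z_s ,w\co\RR^k\zfl\G$, horizontality of $\zx$ gives $[Z_s ,\zx](x,\g)=-(L_\g )_* (\zx\zpu z_s )(x)$, while the bracket of two vertical left-invariant vector fields is itself vertical left-invariant and equal on the fibre to the Lie bracket in $\G$, that is $[Z_s ,W](x,\g)=(L_\g )_* [z_s (x),w(x)]$. Matching vertical components, the Moser equation becomes the linear $\G$-valued equation
$$\zx\zpu z_s = -\zj\, w +(1-s\zj)\,[z_s ,w],$$
which along every $\zx$-trajectory $e^t x_0$ is a first order linear ODE in $z_s$ with smooth coefficients. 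Since $\zj$ vanishes identically on $U$, setting $z_s =0$ on $U$ reduces the equation there to the homogeneous $\zx\zpu z_s =[z_s ,w]$, which is consistent with the zero choice; for each $\zx$-trajectory exiting $U$, extend $z_s$ by integrating the ODE forward from the boundary with zero initial data. Because $\zj\equiv 0$ on the open set $U$, all partial derivatives of $\zj$ vanish along $\zpar U$, and the resulting $z_s$ glues to a global $C^\zinf$ function.

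The flow $F_s$ of $Z_s$ is defined for every $s\zpe[0,1]$ because verticality keeps every trajectory inside the compact fibre $\{x\}\zpor\GG$. Left invariance of $Z_s$ forces $F_s$ to commute with the left $\GG$-action on $\RR^k\zpor\GG$, so each $F_s$ is a principal bundle automorphism covering $\Id_{\RR^k}$. Since $Z_s =0$ on $U\zpor\GG\zcco\overline B_a(0)\zpor\GG$, the map $F_s$ is the identity there, and Moser's identity at $s=1$ yields $F_* Y=Y_1 =\zx+\widetilde W$, which has the properties (a)--(c) demanded. I expect the main obstacle to be precisely the global $C^\zinf$-smoothness of $z_s$ across $\zpar U$; this is exactly what motivates choosing $\zj$ to vanish on an open neighborhood of $\overline B_a(0)$ rather than only on $\overline B_a(0)$ itself.
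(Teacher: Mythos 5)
Your argument is correct, but it reaches the conclusion by a genuinely different mechanism than the paper's proof of Lemma \ref{L37}. The paper chooses $c\in(a,b)$ and a vertical left-invariant $W'$ supported in $B_b(0)\times\GG$ with $W'=W$ on $\overline B_c(0)\times\GG$, then straightens the auxiliary field $\zx+W-W'$ to $\zx$ by building a global flow-invariant section $s$ of $\pi_1$ out of its flow (the same device as in Lemma \ref{L36}); with $\theta(x,\g)=\g\cdot s(x)$ it sets $F=\theta^{-1}$ and $\widetilde W=(\theta^{-1})_*W'$. You keep the trivialization fixed and instead remove the tail $(\chi\circ\pi_1)W$ by a Moser deformation, which you correctly reduce to the affine linear equation $\zx\cdot z_s=-\chi w+(1-s\chi)[z_s,w]$ in $\G$ along the rays of $\zx$. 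Both proofs rest on the same two structural facts: compact fibres make every flow involved complete, and left-invariance of all vertical data makes the resulting diffeomorphisms commute with the left $\GG$-action, hence principal bundle automorphisms over the identity. Your route yields the explicit formula $\widetilde W=(1-\chi\circ\pi_1)W$ (in the paper $\widetilde W$ is only known to be vertical, left-invariant and supported in $B_b(0)\times\GG$), at the price of solving an ODE in the Lie algebra, whereas the paper only ever integrates the given vector field. Two small repairs to your write-up: take $U=B_{a'}(0)$ with $a<a'<b$ (or any star-shaped neighborhood of $\overline B_a(0)$ contained in $\chi^{-1}(0)$), so that each ray of $\zx$ meets $U$ in an initial segment and the prescription ``$z_s=0$ on $U$, then integrate outward'' cannot conflict with the ODE upon a later return of the ray to $U$; and the smoothness worry at $\partial U$ is unfounded, since $z_s$ is the solution of a single linear ODE with coefficients depending smoothly on $(s,x)$ on all of $\RR^k\smallsetminus\{0\}$, hence smooth there by smooth dependence on parameters and initial data, and it vanishes identically on a punctured ball about the origin, so it extends smoothly to $\RR^k$ with no gluing step at all.
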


\begin{proof}
Take $c\zpe (a,b)$.  Consider a vertical left-invariant vector field $W'$, with support
included in $B_b (0)\zpor\GG$, such that $W'=W$ on $\overline B_c(0) \zpor\GG$.
Let $\zF'$ be the flow of $\zx+W-W'$ and let
$s\co\RR^k \smallsetminus\overline B_a (0) \zfl\RR^k \zpor\GG$ be the section given by
$$s(x)=\zF'\left(\log(\| x\|-a), \left({\frac {ax} {\| x\|}},\e\right)\right).$$

As $s(x)=(x,\e)$ when $x\zpe B_c (0)\smallsetminus\overline B_a (0)$, we can extend $s$ to
$\RR^k$ by setting $s(x)=(x,\e)$ on $\overline B_a (0)$.

Now consider the diffeomorphism $\zq\co\RR^k \zpor\GG \zfl\RR^k \zpor\GG$ given
by $\zq(x,\g)=\g\zpu s(x)$. The same reasoning as in the proof of Lemma \ref{L36}
shows that
$$(\zq^{-1})_* (\zx+W-W')=\zsu_{j=1}^k x_j \zpar/\zpar x_j .$$
Therefore,
$$(\zq^{-1})_* Y=(\zq^{-1})_* (\zx+W-W')+(\zq^{-1})_* W'=\zx+ (\zq^{-1})_* W'.$$

For finishing set $F=\zq^{-1}$ and $\widetilde W= (\zq^{-1})_* W'$.
\end{proof}

\begin{remark}\mylabel{R38}
If $Y=\zx+W$ and $W$ vanishes around $\{0\}\zpor\GG$, then there is an $F$ as in Lemma
\ref{L37} such that $F_* Y=\zx$.

Indeed, choose $b>0$ in such a way that $W$ vanishes on $B_b (0)\zpor\GG$ and set
$W'=0$ in the proof above.
\end{remark}

\section{The free case} \mylabel{sec-4}
In this section we prove Theorem \ref{T11} under the following two additional hypotheses:
\begin{itemize}
\item The action of the $n$-dimensional connected compact Lie group $\GG$ on the
$m$-manifold $M$ is free.
\item $k=m-n\zmai 1$.
\end{itemize}

Observe that the free $\GG$-action on $M$ gives rise to a $\GG$-principal fibre bundle $\zp\co M\zfl B$ where $B$
is a connected $k$-manifold.

Let  $\zm\co B\zfl\RR$ be a proper and non-negative Morse function. Let $C$ denote the set of
critical points of $\zm$, which is closed and discrete, hence countable. From the paracompactness
of $B$ it follows that the exists a locally finite family $\{A_p \}_{p\zpe C}$ of disjoint
open sets such that $p\zpe A_p$, $p\zpe C$.

Accoring to \cite[Section 3]{TV2}, there exists a Riemannian
metric $\tilde g$ on $B$ such that the gradient vector field $Y$ of $\zm$ is complete and,
moreover, around each $p\zpe C$ there are coordinates $(x_1 ,\dots,x_n )$ with
$p\zeq 0$ and $Y=\zsu_{j=1}^k \zl_j x_j \zpar/\zpar x_j$ where:
\begin{enumerate}[label={\rm (\arabic{*})}]
\item\label{item:1} $\zl_1 =\dots=\zl_k >0$ if $p$ is a source, i.e.\ $p$ is a minimum of $\zm$.
\item\label{item:2}  $\zl_1 =\dots=\zl_k <0$ if $p$ is a sink, i.e.\ $p$ is a maximum of $\zm$.
\item\label{item:3} Some $\zl_j$ are positive and the others negative if $p$ is a saddle point.
\end{enumerate}
Indeed, the scalars $\zl_1, \ldots, \zl_k$ do depend on the point $p.$ However, to maintain a more concise notation, we have omitted this detail.

Your prompt looks good; however, I can make a small adjustment for clarity:

Furthermore, it is important to emphasize that these properties remain valid even when the vector field $Y$ is scaled by a positive bounded function. This is because their dependence is solely on the Sternberg linearization theorem (see Theorem \ref{Nuevo2}).

Let $I$ be the set of sources of $Y$, i.e. $I$ is the set of local minima of $\zm$. For each $i\zpe I$, let $S_i$ denote the
$Y$-outset of $i$. Then \cite[Lemma 3.3]{TV1} becomes:

\begin{lemma}\mylabel{L41}
The family $\{S_i \}_{i\zpe I}$ is locally finite and $\zung_{i\zpe I} S_i$ a dense set in $B$.
\end{lemma}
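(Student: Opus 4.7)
The plan is to exploit the gradient structure $Y=\operatorname{grad}_{\tilde g}\zm$, which forces $\zm$ to be non-decreasing along every $Y$-trajectory and strictly so away from $C$. Since $\zm$ is proper and non-negative, every backward $Y$-trajectory is trapped in a compact sublevel set, so its $\za$-limit is a non-empty connected compact subset of $C$; discreteness of $C$ then collapses this $\za$-limit to a single critical point. Consequently $B=\bigcup_{p\zpe C}S_p$ as a disjoint union, and the lemma amounts to understanding how the individual pieces sit inside $B$.

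For local finiteness, fix a compact set $K\zco B$ and let $c=\max_K\zm$. If $S_i\zin K\znoi\zva$ for some $i\zpe I$, choose $q$ in the intersection; then $\zm$ is non-decreasing along the forward $Y$-trajectory from $i$ to $q$, so $\zm(i)\zmei\zm(q)\zmei c$. The sublevel set $\zm^{-1}([0,c])$ is compact by properness, and since $C$ is closed and discrete it meets this compact set in only finitely many points, hence in only finitely many sources.

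For density, note first that each $S_i$, $i\zpe I$, is open: in the linearizing coordinates of item \ref{item:1} all eigenvalues are positive at $i$, so an entire neighborhood of $i$ lies in $S_i$, and $Y$-invariance propagated by the diffeomorphism flow makes $S_i$ open globally. Thus $\bigcup_{i\zpe I}S_i$ is open, and it is enough to show that its complement $\bigcup_{p\zpe C\smallsetminus I}S_p$ has empty interior. For each non-source $p$, the local description from items \ref{item:2}--\ref{item:3} exhibits $S_p$, near $p$, as the linear subspace of positive eigendirections of $Y$ at $p$, a smooth submanifold of dimension strictly less than $k$; globally $S_p$ is the forward saturation of this local outset by the flow, hence an injectively immersed submanifold of the same dimension, and therefore nowhere dense in $B$. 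Since $C$ is countable, $\bigcup_{p\zpe C\smallsetminus I}S_p$ is a countable union of nowhere dense sets, and the Baire category theorem on the locally compact Hausdorff manifold $B$ yields the required density.

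The only delicate point is nowhere-denseness of the non-source outsets at their non-critical points: given $q\zpe S_p\smallsetminus\{p\}$, flowing $q$ backwards for sufficient time brings it into the linearization neighborhood of $p$, where $S_p$ is a proper smooth submanifold of dimension $<k$; since the flow is a diffeomorphism, this description transports forward to a neighborhood of $q$, making $S_p$ locally a proper submanifold of dimension $<k$ around every one of its points. Everything else is routine book-keeping for the Morse-gradient flow, and this is why the statement can be presented as a transcription of \cite[Lemma~3.3]{TV1}.
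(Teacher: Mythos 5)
The paper itself gives no argument for this lemma (it is presented as a transcription of Lemma~3.3 of \cite{TV1}), so your proof must stand on its own. Your decomposition $B=\bigcup_{p\in C}S_p$ via $\alpha$-limits, your local finiteness argument through the compact sublevel set $\mu^{-1}([0,c])$, and the openness of each source outset are all correct. The gap is in the density argument, precisely at the point you yourself flag as delicate: you do not actually establish that a non-source outset $S_p$ is nowhere dense. Flowing a single point $q\in S_p$ backwards into the linearization neighborhood $U_p$ only locates the one strand of $S_p$ through $q$; it does not show that $S_p\cap U_p$ equals the local unstable manifold, because a forward trajectory in $S_p$ can leave $U_p$ and later re-enter it ($\mu$ only increases along trajectories, but $U_p$ is a coordinate ball rather than a sublevel set, so re-entry at a larger value of $\mu$ is not excluded). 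Consequently $S_p\cap V$, for $V$ a neighborhood of a point of $S_p$, may consist of infinitely many $d$-dimensional sheets accumulating on one another, and ``locally a proper submanifold around every one of its points'' does not follow from the argument given. Since an injectively immersed submanifold of positive codimension can perfectly well be dense (an irrational line on a torus), nowhere-density genuinely requires proof, and Baire cannot be applied to the sets $S_p$ as you have described them.

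The repair is short and stays inside your framework: write $S_p=\bigcup_{n\in\NN}\Phi_n\bigl(\overline{W}_n\bigr)$, where $\Phi_t$ is the flow of $Y$ and the $\overline{W}_n$ are compact pieces exhausting the local unstable manifold of $p$ in the linearizing chart. Each $\Phi_n\bigl(\overline{W}_n\bigr)$ is a compact embedded submanifold (with boundary) of dimension $d_p<k$, hence closed with empty interior, hence nowhere dense. Therefore $\bigcup_{p\in C\smallsetminus I}S_p$ is a countable union of nowhere dense sets (recall $C$ is countable), so by Baire it has empty interior; being the complement of the open set $\bigcup_{i\in I}S_i$, it is closed, and its complement is therefore open and dense. (Equivalently, each piece is Lebesgue null, so the closed complement is null and thus has empty interior.) With this substitution the rest of your argument goes through unchanged.
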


In what follows we assume that the locally finite family $\{A_p \}_{p\zpe C}$ defined above verifies
 $A_i \zco S_i$ by replacing $A_i$ by $A_i \zin S_i$
if necessary.

Shrinking each $A_p$, $p\zpe C\smallsetminus I$, allows us to identify $A_p$ with the ball
$B_{\ze_p}(0)$, with $p\zeq 0$ and $Y=\zsu_{j=1}^k \zl_j x_j \zpar/\zpar x_j$, thus 
the $\GG$-principal fibre bundle $\zp\co\zp^{-1}(A_p ) \zfl A_p$ becomes
 $\zp_1 \co B_{\ze_p}(0)\zpor\GG \zfl B_{\ze_p}(0)$.

As the family $\{A_p \}_{p\zpe C}$ is locally finite and its elements are disjoint, then it is possible to
find a connection $\D$ on $\zp\co M\zfl B$ with the following properties:

\begin{enumerate}[label={\rm (\arabic{*})}]
\item\mylabel{er1} For every $p\zpe C\smallsetminus I$ there exists $0<\zd_p \zmei\ze_p$ such that the connection $\D$ on
$B_{\zd_p}(0)\zpor\GG$ equals the one induced by the first factor.
\item $\D$ is a product on each fibre $\zp^{-1}(i)$, $i\zpe I$.
\end{enumerate}

Let $Y'$ denote the lift of $Y$ to $M$ by means of $\D$. By construction $Y'$ is
$\GG$-invariant and complete (recall $\GG$ is compact). on the other hand, given that Property
\ref{er1} above holds for $\D$, whenever $p\zpe C\smallsetminus I$ , we can assert that $Y'=\zsu_{j=1}^k \zl_j x_j \zpar/\zpar x_j$ on
$B_{\zd_p}(0)\zpor\GG$.

As it is well known, the $\zw$-limit of a regular trajectory of the gradient vector field $Y$
is empty, a saddle or a sink. Therefore (see \cite[p.\ 884]{TV2} for the definition of {\em order}):

\begin{lemma}\mylabel{L42}
The $\zw$-limit of a regular trajectory of $Y'$ is empty or a zero of $Y'$ of order $1$.
\end{lemma}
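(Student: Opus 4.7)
The plan is to pull the statement back from the known structure of $\zw$-limits of the gradient $Y$ via the horizontal lift, and then read off the explicit linear form of $Y'$ near saddle and sink fibres. I would first observe that since $Y'$ is the horizontal lift of $Y$ through $\D$, we have $\zp_* Y'=Y$ and the zero set of $Y'$ is exactly $\zp^{-1}(C)$; consequently every trajectory of $Y'$ projects to a trajectory of $Y$ under $\zp$, and regularity upstairs is equivalent to regularity downstairs. For a regular trajectory $\zg$ of $Y'$, the projection $\zg^B:=\zp\zci\zg$ is thus a regular trajectory of the gradient $Y$, and by the description of $\zw$-limits of gradient flows recalled just before the lemma together with Lemma \ref{L41}, $\zw(\zg^B)$ is either empty or of the form $\{p\}$ with $p\zpe C\smallsetminus I$.

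In the empty case, the compactness of the fibres of $\zp$ (which follows from the compactness of $\GG$) immediately yields $\zw(\zg)=\zva$, since any accumulation point of $\zg(t)$ would project to one of $\zg^B(t)$. In the non-empty case $p\zpe C\smallsetminus I$ is either a saddle or a sink, so for $t$ large enough $\zg(t)$ lies in the local trivialization $B_{\zd_p}(0)\zpor\GG$, where by property \ref{er1} of $\D$ one has $Y'=\zsu_{j=1}^k \zl_j x_j \zpar/\zpar x_j$. Since this vector field is s-horizontal, its flow is given explicitly by $\zF_t (x,\g)=(e^{\zl_1 t} x_1 ,\dots,e^{\zl_k t} x_k ,\g)$, with the $\GG$-component constant. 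The hypothesis $\zg^B (t)\zfl 0$ then forces the $x_j$-component of $\zg$ to vanish at time $T$ for every $j$ with $\zl_j >0$, so that $\zg(t)$ converges to a unique point $(0,\g_0 )\zpe\{0\}\zpor\GG$, which is a zero of $Y'$.

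Finally, to check that $(0,\g_0 )$ is a zero of order $1$, I would work in the same coordinates: the zero locus of $Y'$ around $(0,\g_0 )$ is the regular submanifold $\{0\}\zpor\GG$ (a fibre of $\zp$), and the linear part of $Y'$ at $(0,\g_0 )$ is block-diagonal with block $\mathrm{diag}(\zl_1 ,\dots,\zl_k )$ on the horizontal directions and zero on the vertical ones. All $\zl_j$ are nonzero at saddle and sink points, so the kernel of this linear part coincides with the tangent space to the zero locus and the transverse block is invertible; a direct bracket computation against the definition of tangency at order $1$ recalled in Section \ref{sec-3} then confirms the order-$1$ condition.

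The main obstacle I anticipate is the saddle case: one must rule out that $\zg$ could re-exit the chart $B_{\zd_p}(0)\zpor\GG$ along an unstable direction, and one must identify the limit point on the fibre. Both issues are resolved by the exponential form of the flow together with the convergence $\zg^B (t)\zfl 0$, which kills the unstable coordinates of $\zg$ from time $T$ on and fixes the vertical component $\g_0$ once and for all.
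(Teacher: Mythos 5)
Your argument is correct and matches the paper's (essentially unproved) justification: the lemma is presented there as an immediate consequence of the standard description of $\omega$-limits of gradient trajectories on $B$, combined with the product form of $\D$ and the normal form $Y'=\sum_{j=1}^k\lambda_j x_j\,\partial/\partial x_j$ near each fibre over $C\smallsetminus I$, which is exactly the argument you spell out (projection of trajectories, emptiness via fibre compactness, vanishing of the unstable coordinates, constancy of the $\GG$-component). One small caution: ``zero of order $1$'' here is the jet-order notion of \cite[p.~884]{TV2} (the $1$-jet of $Y'$ at the limit point is nonzero, which your block-diagonal computation of the linear part already establishes since all $\lambda_j\neq 0$), and \emph{not} the ``tangent to a submanifold at order $1$'' condition of Section~\ref{sec-3} that your last sentence invokes --- that condition in fact fails for $Y'$ along $\{0\}\times\GG$, since $[Y',\partial/\partial x_j]=-\lambda_j\,\partial/\partial x_j$ is not tangent to the fibre.
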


Now, according to \cite[Proposition 2.1]{TV1}, each outset $S_i$ may be identified to $\RR^k$ in such a way that
$Y=a_i \zsu_{j=1}^k  x_j \zpar/\zpar x_j$, $a_i >0$.
In turn, by Lemma \ref{L36}, $\zp\co\zp^{-1}(S_i )\zfl S_i \zeq\RR^k$ can be identified, as
$\GG$-principal fibre bundle, to $\zp_1 \co\RR^k \zpor\GG \zfl\RR^k$ in such a way that
$Y'=a_i \zsu_{j=1}^k  x_j \zpar/\zpar x_j$ (where
$a_i \zsu_{j=1}^k  x_j \zpar/\zpar x_j$ is regarded as an s-horizontal vector field).

Moreover, we can assume that $A_i$ is identified with an open set in $\mathbb{R}^k$ that includes the closed ball $\overline{B}_3(0)$. This can be achieved by applying an appropriate dilation to both instances of $\RR^k,$ the total space and the base space, as needed.

Let $K_i$ denote the compact set $\overline B_3 (0)\zpor\GG$  when regarded within $\zp^{-1}(S_i)$.

\subsection{Construction of $Y''$} \mylabel{sub-1}
The next step is to modify $Y'$ by adding a vertical component to it. More precisely, considering that the family ${\zp^{-1}(S_i)}{i\in I}$ is locally finite and its elements are disjoint (due to the properties of the family ${S_i}{i\in I}$), we will construct the new vector field by augmenting $Y'$ on each $\zp^{-1}(S_i)$ with a left-invariant vertical vector field whose support is contained within $K_i$.

Let $Z=\zx+U$ be a vector field on $\RR^k \zpor\GG$ where:

\begin{enumerate}[label={\rm (\arabic{*})}]
\item $U$ is s-vertical and left-invariant.
\item $U(0,\e)$ belongs to some dense couple of $\G\zeq T_\e \GG$.
\end{enumerate}

Let $F\co\RR^k \zpor\GG \zfl\RR^k \zpor\GG$ be the principal bundle automorphisms (thus a diffeomorphism) as provided by Lemma
\ref{L37} for $a=1$, $b=2$ and $Z$. Set $F_* Z=\zx+\widetilde U$.
Observe that the support of $\widetilde U$ is included in $\overline B_3 (0)\zpor\GG$. Let
$\widetilde U_i$ be the vector field on $\zp^{-1}(S_i )$ which is identified to
$\widetilde U$ on $\RR^k \zpor\GG$.

Define the vector field $Y''$ by setting:
\begin{itemize}
\item $Y''=Y'+a_i \widetilde U_i$ on $\zp^{-1}(S_i )$, $i\zpe I$.
\item $Y''=Y'$ on $M-\zung_{i\zpe I}\zp^{-1}(S_i )$.
\end{itemize}

Then $Y''$ is left-invariant and complete ($\zp_* Y''=Y$ and $\GG$ is compact). Moreover, Lemma
\ref{L42} still holds for $Y''$ because $Y''=Y'$ on $M\smallsetminus\zung_{i\zpe I}K_i$.

The identification of $\zp^{-1}(S_i )$ with $\RR^k \zpor\GG$ may be composed with
$F^{-1}$ in order to obtain a second identification of $\zp^{-1}(S_i )$ with $\RR^k \zpor\GG$
such that $Y''=a_i (\zsu_{j=1}^k x_j \zpar/\zpar x_j +U)$.

It is easily seen that $q\zpe\zp^{-1}(I)$ if and only if the closure of the $Y''$-trajectory
of $q$ is a torus of dimension $\zmai 1$. Thus, $\zp^{-1}(I)$ is an invariant of $Y''$, but it's important to note that not necessarily each $\zp^{-1}(i),$ $i\in I$, is an invariant of this vector field. To avoid this issue, we will need to modify $Y''.$

\subsection{Construction of $X$} \mylabel{sub-2}
{\em We first assume} $k\zmai 2$. For every source $i\zpe I$ choose $P_i \zco A_i$ a set of $k+1$ points, all them close enough to $i$ but different from it, in such a way that the linear
$\za$-limits of the $Y$-trajectories of these points are in general position (see \cite[pp.\ 319--320]{TV1}
and \cite[p.\ 844]{TV2} for definitions). Let $P=\zung_{i\zpe I}P_i$.

Fix an injective set theoretical map $\eta\colon  P\to\NN\smallsetminus\{0\}$. As $\{A_i \}_{i\zpe I}$ is still
locally finite, there is a bounded function $\zt\co B\zfl\RR$ that is positive on $B\smallsetminus P$ and has a zero of order $2\eta(p)$ at every $p\zpe P$.

Finally define $X=(\zt\zci\zp)Y'',$ which is  clearly a $\GG$-invariant and complete vector field.

Let $R_i$, $i\zpe I$, be the outset of $i$ with respect to $\zt Y$. Given a point $a\zpe\RR^k,$
let $[a,\zinf)\subset \RR^k$ denote the ray $\{ta \co t\zpe[1,\zinf)\}$. Identifying $S_i$ with $\RR^k$ in the usual way
implies that $S_i\smallsetminus R_i =\zung_{p\zpe P_i}[p,\zinf)$. Therefore:

\begin{lemma}\mylabel{L43}
The family $\{R_i\}_{i\in I}$ is locally finite, and the set $\bigcup_{i\in I} R_i$ is dense in $B$. Consequently, the family $\{\zp^{-1}(R_i)\}_{i\in I}$ is also locally finite, and the set $\bigcup_{i\in I} \zp^{-1}(R_i)$ is dense in $M$.
\end{lemma}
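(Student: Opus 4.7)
The plan is to combine the explicit description $S_i\smallsetminus R_i =\zung_{p\zpe P_i}[p,\zinf)$ provided by the construction with Lemma~\ref{L41}, and then transfer both conclusions to $M$ via the principal bundle projection $\zp$. The first step is to verify that $R_i \zco S_i$ for every $i\zpe I$: on the open set $B\smallsetminus P$ where $\zt>0$, the vector fields $Y$ and $\zt Y$ share the same orbits up to reparametrization, so any point whose backward $\zt Y$-trajectory converges to $i$ also has its backward $Y$-trajectory converging to $i$. Consequently $\{R_i \}_{i\zpe I}$ inherits local finiteness directly from $\{S_i \}_{i\zpe I}$.

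For the density claim, I would use the identification $S_i \zeq \RR^k$ to realize $R_i$ as the complement in $\RR^k$ of the finite union $\zung_{p\zpe P_i}[p,\zinf)$ of rays. Since $k\zmai 2$ (the standing assumption of the current subsection), this complement is open and dense in $\RR^k$, so each $R_i$ is open and dense in $S_i$. Hence $\zung_{i\zpe I} R_i$ is dense in $\zung_{i\zpe I} S_i$, and Lemma~\ref{L41} guarantees the latter is dense in $B$.

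The consequence for the total space is a formal transfer along the $\GG$-principal bundle $\zp\co M\zfl B$, which is both proper (by compactness of $\GG$) and open. Given $q\zpe M$, any open neighborhood $W$ of $\zp(q)$ in $B$ meeting only finitely many $R_i$'s produces an open neighborhood $\zp^{-1}(W)$ of $q$ meeting only finitely many $\zp^{-1}(R_i)$'s, establishing local finiteness. For density, any open $U\zco M$ has open image $\zp(U)$ in $B$, which therefore meets $\zung_{i\zpe I}R_i$; a preimage witnesses that $U$ meets $\zung_{i\zpe I}\zp^{-1}(R_i)$.

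There is essentially no serious obstacle in this lemma; the only substantive ingredient is the elementary topological fact that a finite union of rays is nowhere dense in $\RR^k$ for $k\zmai 2$, which matches exactly the standing hypothesis under which the vector field $X$ is being constructed.
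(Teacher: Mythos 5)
Your argument is correct and follows exactly the route the paper intends: the paper states Lemma \ref{L43} as an immediate consequence of the identity $S_i\smallsetminus R_i=\bigcup_{p\in P_i}[p,\infty)$ together with Lemma \ref{L41}, which is precisely what you spell out (nowhere-density of finitely many rays for $k\geq 2$, plus transfer along the open, proper projection $\pi$). No discrepancy with the paper's reasoning.
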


Now, we identify $\zp^{-1}(S_i )$ with $\RR^k \zpor\GG$ so that
$Y''=a_i (\zsu_{j=1}^k x_j \zpar/\zpar x_j +U)$. Then
$\zp^{-1}(S_i )\smallsetminus\zp^{-1}(R_i )=(\zung_{p\zpe P_i}[p,\zinf))\zpor\GG$.
According to \cite[Proposition 2.1]{TV1}, which can also be considered as a consequence of Theorem \ref{Nuevo2},
for each $i\zpe I$ there exists a diffeomorphism $f_i \co R_i \zfl\RR^k$ such that
$$(f_i )_* (a_i \zt\zsu_{j=1}^k x_j \zpar/\zpar x_j )=b_i \zsu_{j=1}^k x_j \zpar/\zpar x_j$$
where $b_i =a_i \zt(0)>0$. Therefore, the diffeomorphism
$F_i \co R_i \zpor\GG\zfl  \RR^k \zpor\GG,$ given by $F_i (x,\g)=(f_i (x),\g),$ allows us to
identify $\zp\co \zp^{-1}(R_i )\zfl R_i$
with $\zp_1 \co\RR^k \zpor\GG \zfl \RR^k$, as $\GG$-principal fibre bundles and over $f_i$, in such a way that
$$X=b_i \zsu_{j=1}^k x_j \zpar/\zpar x_j +\widetilde W_i$$
where $\widetilde W_i$ is vertical and left-invariant, $\widetilde W_i (0,\e)$ belongs to
some dense couple and every  $\widetilde W_i (x,\e)$, $x\zpe\RR^k$, is proportional
to  $\widetilde W_i (0,\e)$.

By Lemma \ref{L34} applied to $b_i \zsu_{j=1}^k x_j \zpar/\zpar x_j +\widetilde W_i$ there
exist a diffeomorphism $\widetilde F \co\RR^k \zpor\GG \zfl\RR^k \zpor\GG$, which
commutes with the left action of $\GG$, and $V_i \zpe\G$ such that
$V_i =\widetilde W_i (0,\_)$ and
$$\widetilde F_* \left(b_i \zsu_{j=1}^k x_j \zpar/\zpar x_j +\widetilde W_i \right)
=b_i \zsu_{j=1}^k x_j \zpar/\zpar x_j +V_i .$$

Composing, on the left, the foregoing identification with $\widetilde F$ gives rise to a second
identification
of $\zp\co \zp^{-1}(R_i )\zfl R_i$  with $\zp_1 \co\RR^k \zpor\GG \zfl \RR^k$, over some $\tilde f_i \co R_i \zfl \RR^k$ and as $\GG$-principal fibre bundles, such that
$X=b_i \zsu_{j=1}^k x_j \zpar/\zpar x_j +V_i$, $b_i >0$, where:
\begin{enumerate} [label={\rm (\arabic{*})}]

\item $V_i$ is s-vertical and left-invariant.
\item $V_i (0,\e)$ belongs to some dense couple.
\end{enumerate}
In the following, we will refer to this type of identification as {\em suitable}.

We also obtain that:
\begin{enumerate} [label={\rm (\alph{*})}]
\item Each $\zp^{-1}(i)$ is a connected component of the set of those points of $M$
such that the closure of their $X$-trajectories is a torus of dimension $\zmai 1$.
\item $\zp^{-1}(R_i )$ is the outset of $\zp^{-1}(i)$, that is the set of points of $M$ whose
$X$-trajectory has its $\za$-limit included in $\zp^{-1}(i)$.
\item For each $p\zpe P_i$ there exists one and only one open vector half-line $E_p$ such that
$E_p \zpor\GG$ is the set of points of $M$ whose $X$-trajectory has its $\za$-limit
included in $\zp^{-1}(i)$ and whose $\zw$-limit is a zero of $X$ of order $2\eta(p)$.
\end{enumerate}

(Recall that
$\zp^{-1}(R_i )=\big(\RR^k \smallsetminus\zung_{p\zpe P_i}[p,\zinf)\big)\zpor\GG$ and
$X=(\zt\zci\zp)\zpu(a_i \zsu_ {j=1}^k x_j \zpar/\zpar x_j +U)$ for some identification
of $\zp^{-1}(S_i )$ with $\RR^k \zpor\GG$.)

\begin{remark}\mylabel{R44}
From the preceding properties, it follows that if $F\co M\zfl M$ is an automorphism of $X$, then
$F\big(\zp^{-1}(i)\big)=\zp^{-1}(i)$ and, consequently, $F\big(\zp^{-1}(R_i )\big)=\zp^{-1}(R_i )$.

Moreover, if $F\colon\zp^{-1}(R_i) \rightarrow \zp^{-1}(R_i)$, when $\zp^{-1}(R_i)$ is suitably identified as $\mathbb{R}^k \times \GG$, is given by $F(x,\g)=(\zf_i (x),\boldsymbol{\zl}_i \zpu\g)$ where $\zf_i \zpe \GL(k,\RR)$ and $\boldsymbol{\zl}_i \zpe\GG$ (refer to Proposition \ref{P32}), then $\zf_i$ is a positive multiple of the identity. This is required to preserve the half-lines $E_p$, where $p \in P_i$, and these half-lines are in general position as a result of the construction of $P_i$.
\end{remark}

{\em Finally assume $k=1$}. In this case one cannot add additional zeros to construct the set $P$
because in this scenario $\zung_{i\zpe I}R_i$ is no longer dense.
Instead of that, we consider a proper and non-negative Morse function $\zm\co B\zfl\RR,$ which has two or more maxima if $B$ is not compact, 
and define $P$ to be the set of sinks of $Y$. Then, given an injective set theoretical map $\eta\colon  P\to\NN\smallsetminus\{0\}$, one can construct a bounded function $\zt\co B\zfl\RR$ that is positive on $B\smallsetminus P$ and has a zero of order $2\eta(p)$ at every $p\zpe P$, as in the case $k\geq 2$. Therefore, if $p\zpe P$, then $\zt Y$ has
 a zero of order $2\eta(p) +1$ at $p$.

The remaining arguments work in a similar way and one concludes that
$F\big(\zp^{-1}(i)\big)=\zp^{-1}(i)$ and $F\big(\zp^{-1}(R_i )\big)=\zp^{-1}(R_i )$ where $R_i =S_i$.
Obviously any $\zf_i \zpe \GL(1,\RR)$ is a multiple of the identity.

Another way of dealing with the case $k=1$ is to consider a Morse function $\zm$ on $B$
with a single minimum. In this case $F\big(\zp^{-1}(R_i )\big)=\zp^{-1}(R_i )$
where $R_i =S_i$.

\subsection{Construction of $X_1$} \mylabel{sub-3}
Let $V_1$ be a vertical vector field on $M$ such that 
\begin{enumerate} [label={\rm (\roman{*})}]
\item $V_1$
vanishes on $M\smallsetminus\zung_{i\zpe I}\zp^{-1}(R_i ).$

\item On every $\zp^{-1}(R_i )$, suitably identified to $\RR^k \zpor\GG$, satisfies:
\begin{enumerate} [label={\rm (\arabic{*})}]
\item $V_1$ is left-invariant and its support is compact.
\item $V_i (0,\e), V_1 (0,\e)$ is a dense couple of $T_\e M\zeq\G$.
\end{enumerate}
\end{enumerate}

Let $h\co M\zfl B$ be a function vanishing on $B\smallsetminus\zung_{i\zpe I}R_i$
such that for every $i\zpe I$:
\begin{enumerate} [label={\rm (\alph{*})}]
\item $j^4_i h =0$ but $j^5_i h\znoi 0$.
\item $h\co R_i \zfl\RR$ has compact support.
\end{enumerate}

We define $X_1 =V_1 +(h\zci\zp)X$,  which is obviously a complete and left-invariant vector field.

Then, for every $F\zpe\Aut(X,X_1 )$, we have $F\big(\zp^{-1}(R_i )\big)=\zp^{-1}(R_i )$ for all $i \in I$, and $F\co\zp^{-1}(R_i ) \zfl\zp^{-1}(R_i )$ preserves both $X$ and $X_1$.
Making use of a suitable identification of $\zp^{-1}(R_i )$ and
$\RR^k \zpor\GG$, our $F$ can be seen as a self-diffeomorphism of $\RR^k \zpor\GG$
which preserves $X=b_i \sum_{j=1}^k x_j \zpar/\zpar x_j +V_i$,
$b_i >0$, and $X_1 =V_1 +(h\zci\zp_1 )X$.

By Proposition \ref{P32} there exist $\zf_i \zpe \GL(k,\RR)$ and $\boldsymbol{\zl}_i \zpe\GG$ such that
$F(x,\g)=(\zf_i (x),\boldsymbol{\zl}_i \zpu\g)$. From Remark \ref{R44} if $k\zmai 2$,
or directly if $k=1$, it follows that $\zf_i$ is a multiple of the identity.
Finally, Remark \ref{R33} (note that $i$ is identified to $0\zpe\RR^k$) allows us to infer
that $\zf_i =\Id$, that is $F=L_{\boldsymbol{\zl}_i}$ on $\zp^{-1}(R_i )$.

Consider a $\GG$-invariant Riemannian metric on $M$. Then $F$ is an isometry on
$\zung_{i\zpe I}\zp^{-1}(R_i )$ and, by continuity, on $M$. Thus fixed $q\zpe I$ then
$F=L_{\zl_q}$ on the open set $R_q$, which is non-empty. Since $M$ is connected and
$F$, $L_{\boldsymbol{\zl}_q}$ are isometries, it follows that $F=L_{\boldsymbol{\zl}_q}$ on the whole $M$.

{\em In other words, Theorem \ref{T11} is proved in the free case ($k\zmai 1$).}

\begin{remark}\mylabel{R45}
Consider two $\GG$-invariant, positive and bounded function $\zf,\zf_1 \co M\zfl\RR$, and define
 $\widetilde{X}=\zf X$ and $\widetilde{X}_1 =\zf_1 X_1$ where $X,X_1$ are the vector
fields constructed above. Then both $\widetilde X$ and $\widetilde X_1$ are $\GG$-invariant and
complete, and the couple $\widetilde{X},\widetilde{X}_1$  describes $\GG$ as $X,X_1$ does so.
Indeed, $X$ and $\widetilde X$ share the same singularities with the same order and have identical trajectories, albeit with different parametrizations but maintaining the same orientation. The same holds for $X_1$ and  $\widetilde{X}_1$. Therefore, the reasoning involving $\widetilde{X}$ and $\widetilde{X}_1$ follows a similar line of argumentation as that for $X$ and $X_1.$
\end{remark}

\section{Actions with a free point} \mylabel{sec-5}
In this section we conclude the proof of Theorem \ref{T11}. Consider an action of a
connected compact Lie group $\GG$, with dimension $n$, on a connected $m$-manifold $M$.
Assuming the existence of a free point, the type of the principal orbits of the action
is $\GG$ itself, and if $S$ is the set of non-free points, then $M\smallsetminus S$ is open, connected and
dense. Furthermore, the $\GG$-action on $M\smallsetminus S$ is free (see \cite[Theorem IV.3.1]{BR}).

{\em We first assume} $m>n$. Let $X',X_1 '$ be a couple of vector fields on $M\smallsetminus S$ describing
$\GG$ and constructed like in the preceding section.

By \cite[Proposition 7.2]{TV2} there are two functions $\zr,\zf_1 \co M\zfl\RR$
bounded, $\GG$-invariant,  vanishing on $S$ and positive on $M\smallsetminus S$, such that the vector fields
$\widehat X,\widehat X_1$ on $M$ defined by $\widehat X=\zr X'$,
$\widehat X_1 =\zf_1 X'_1$ on $M\smallsetminus S$ and $\widehat X=\widehat X_1 =0$ on $S$, are
$\GG$-invariant and differentiable.

Set $X_1 =\widehat X_1$; note that $X_1 =\zf_1 X_1'$ on $M\smallsetminus S$.

Let $h\co\RR\zfl\RR$ be the function given by $h(t)=0$ if $t\zmei 0$ and
$h(t)=\exp(-1/t)$ if $t>0$. Set $X=(h\zci\zr)\widehat X$. Then $X=\zf X'$ on $M\smallsetminus S$ where
$\zf=(h\zci\zr)\zpu\zr$. Moreover, $X,X_1$ are $\GG$-invariant and complete, and $X$
vanishes at order infinity at $p$ if and only if $p\zpe S$. By Remark \ref{R45} the couple $X,X_1$ describes $\GG$ on $M\smallsetminus S$.

Given $F\zpe\Aut(X,X_1 )$, then $F(S)=S$ and $F\co M\smallsetminus S\zfl M\smallsetminus S$ is an automorphism of $X,X_1$ regarded as vector fields on $M\smallsetminus S$. Therefore,
there exists $\boldsymbol{\za}\zpe\GG$ such that $F=L_{\boldsymbol{\za}}$ on $M\smallsetminus S$ and, by continuity, on the whole $M$.

{\em Finally, we assume} $m=n$, that is we assume that $\GG$ acts on itself on the left.
Consider a couple dense $X,X_1$ of $\G$. Let $F\co\GG\zfl\GG$ be an automorphism of
this couple. By replacing $F$ by $L_{\boldsymbol{\zl}}\zci F$, where ${\boldsymbol{\zl}}=(F(\e))^{-1}$, one may
suppose $F(\e)=\e$. In this case $F_{\zbv\HH}=\Id$, where $\HH$ is the connected Lie
subgroup of $\GG$ whose Lie algebra is spanned by  $X,X_1$.
But $\overline\HH=\GG$, so $F=\Id$.

{\em This concludes the proof of Theorem \ref{T11}.}

We end this section by providing an example of an action with free points.

\begin{example}\mylabel{E62}
Endow $V=\zpr_{j=1}^{n-1}\RR^n $, $n\zmai 3$, with the $\operatorname{SO}(n)$-action given by
$\g\zpu v=(\g\zpu v_1 ,\dots,\g\zpu v_{n-1}),$ where $v=(v_1 ,\dots,v_{n-1})$. This
induces an action of $\operatorname{SO}(n)$ on the sphere
$$S^{n(n-1)-1}=\{v\zpe V\co \|v_1 \|^2 +\dots+\|v_{n-1} \|^2 =1 \},$$
such that a point $v$ of $S^{n(n-1)-1}$ is a free for this $\operatorname{SO}(n)$-action if and only if the vectors
$v_1 ,\dots,v_{n-1}$ are linearly independent. Thus the $\operatorname{SO}(n)$-action on $S^{n(n-1)-1}$ has both free and
non-free points.

For instance, let $n=3$, so $V=\RR^3 \zpor\RR^3$ and the induced $\operatorname{SO}(3)$-sphere is $S^5$. Let $F\co S^5 \zfl\RR^2$ be the map defined by
$F(v)=(\langle v_1 ,v_1\rangle, \langle v_1 ,v_2\rangle)$. Then a computation shows that $F(S^5 )=E$,
where $E=\{x\zpe\RR^2 \co (x_1 -1/2)^2 +x^2_2 \zmei1/4\}$, and $v\zpe S^5$ is free if
and only if $F(v)$ belongs to the interior of $E$. Indeed, given real numbers $a,b\geq 0$, and $c$, there exist two vectors
$v_1 , v_2 \zpe\RR^3$ such that $\| v_1 \|^2 =a$, $\| v_2 \|^2 =b$ and $<v_1 ,v_2 >=c$
if and only if $c^2 \zmei ab$, and $v_1 ,v_2$ are linearly independent if and only if $c^2 <ab$. Therefore, $x\zpe\RR^2$ belongs to
$F(S^5 )$ if and only if $x^{2}_2 \zmei x_1 (1-x_1 )$ while
$x$ is the image of a free point if and only if $x^{2}_2 < x_1 (1-x_1 )$. Finally observe that $x^{2}_2 \zmei x_1 (1-x_1 )$ if and only if
$ (x_1 -1/2)^2 +x^2_2 \zmei1/4$.
\end{example}

\section{Invariant vector fields and homogeneous spaces} \mylabel{sec-7}
In this section, we demonstrate that the hypothesis of the existence of a free point in Theorem \ref{T11} cannot be omitted. This is shown through Corollary \ref{C72} and Theorem \ref{T73}.

Now, consider an effective and transitive action
$\GG\zpor P\zfl P$ of a compact and connected Lie group $\GG$, of dimension $n$, on a
compact and connected manifold $P$, of dimension $r\zmai 1$.

Let $\A\zco\mathfrak X(P)$ be the Lie algebra of fundamental vector fields associated to
the action of $\GG$, thus $\A$ is  isomorphic,  in a natural way, to the algebra of right invariant
vector fields on $\GG$ (see \cite{PA}). Additionally, let $\B\zco\mathfrak X(P)$ be the Lie algebra
of those vector fields on $P$ that commute with (the action of) $\GG$, i.e., they are
$\GG$-invariant. It is evident that $[\A,\B]=0$.

For each $p\zpe P,$ let $\GG_p$ denote its isotropy subgroup, and set
$\B(p)=\{X(p) |\, X\zpe\B\}$. As $\GG$ and $\B$ commute one has:
\begin{itemize}
\item Every element of $\B$ is completely determined by its value at a point; thus
$\dim \B(p)=\dim \B$, $p\zpe P$, and $\B$ defines a foliation, say $\F$, on $P$.
\item If $p$ and $q$ belong to the same leaf of $\F$ then $\GG_p =\GG_q$.
\end{itemize}

Therefore, if $\dim \B=r$, then  $\GG_p$ does not depend on $p$, and since the action is
effective, $\GG_p =\{\e\}$. In other words, $P$ may be identified to $\GG,$ and $\B$ to
the algebra of left-invariant vector fields on $\GG$.
{\em Thus $\dim \B<r$ if and only if the isotropy subgroups have two or more elements.}

On the other hand, observe that if $\GG$ is a torus then necessarily each
$\GG_p =\{\e\}$ and $\dim \B=\dim \GG=\dim P$.

\begin{proposition}\mylabel{P71}
Let 
$\GG\zpor P\zfl P$ be an effective and transitive action of a compact and connected Lie group $\GG$, of dimension $n$, on a
compact and connected manifold $P$, of dimension $r\zmai 1$. Let $\B\zco\mathfrak X(P)$ be the Lie algebra vector fields on $P$ that commute with (the action of) $\GG$. If $\dim \B<r$, then there exists a diffeomorphism $\zl\co P\zfl P$ which preserves every
element of $\B$ and such that $\zl\znoi L_\g$ for all $\g\zpe\GG$.
\end{proposition}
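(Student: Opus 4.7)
My plan is to realise $P$ as a homogeneous space $\GG/\HH$, use the classical identification of $\Diff^\GG(P)$ with the normalizer modulo isotropy, and then construct $\zl$ as a compactly supported lift through a principal bundle whose base space is, by hypothesis, positive-dimensional. Fix $p_0\zpe P$ and let $\HH=\GG_{p_0}$, so that by transitivity $P\zeq\GG/\HH$. Set $N=N_\GG(\HH)$ and $\bar N=N/\HH$; the standard fact that every $\GG$-equivariant self-diffeomorphism of $\GG/\HH$ has the form $R_n(g\HH)=gn\HH$ for some $n\zpe N$ identifies $\Diff^\GG(P)$ with $\bar N$. Since this group integrates $\B$, the Lie algebra $\B$ is exactly the Lie algebra of $\bar N^0$, and a diffeomorphism of $P$ preserves every element of $\B$ if and only if it commutes with the right action of $\bar N^0$.

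Because $\bar N^0$ is a compact Lie group acting freely and properly on $P$ from the right, $\zp\co P\zfl M=P/\bar N^0$ is a principal $\bar N^0$-bundle, and the hypothesis $\dim\B<r$ yields $\dim M\zmai 1$. The left $\GG$-action on $P$ descends to a smooth transitive left $\GG$-action on $M$, so we may fix a $\GG$-invariant Riemannian metric on $M$ by averaging.

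I build $\zl$ as follows. Choose a contractible open set $U\zco M$ over which $\zp$ trivialises as $U\zpor\bar N^0$, pick a nonzero vector field $\bar Z$ on $M$ with compact support in $U$, and let $Z$ be the vector field on $P$ which in the trivialisation is $(\bar Z,0)$ and which is zero outside $\zp^{-1}(U)$. Then $Z$ is smooth and invariant under the right $\bar N^0$-action, so its time-one flow $\zl$ is a self-diffeomorphism of $P$ commuting with the right $\bar N^0$-action---hence preserving every element of $\B$---and projecting to a nonidentity diffeomorphism $\bar\zl$ of $M$ with compact support in $U$.

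To see that $\zl\znoi L_\g$ for every $\g\zpe\GG$, suppose $\zl=L_\g$. The induced map on $M$ would then be the left translation $\bar L_\g$, which is an isometry of the chosen $\GG$-invariant metric. But $\bar\zl$ equals the identity on the nonempty open set $M\smallsetminus\operatorname{supp}(\bar Z)$, and an isometry that agrees with the identity on a nonempty open set must be the identity on the whole connected manifold $M$; this forces $\bar\zl=\Id$, contradicting the choice of a nonzero $\bar Z$. The main non-routine input in this plan is the identification $\Diff^\GG(\GG/\HH)=N_\GG(\HH)/\HH$ together with the accompanying fact that the fundamental vector fields of its right action on $P$ span $\B$; once these are granted, the geometric construction above produces $\zl$.
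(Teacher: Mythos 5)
Your argument is correct in substance, but it takes a genuinely different route from the paper's. The paper never invokes the identification $\Diff^{\GG}(\GG/\HH)\cong N_{\GG}(\HH)/\HH$: it proves by hand, using normal coordinates for a $\GG$-invariant metric and the linearity of isotropies at their fixed points, that the leaves of the foliation defined by $\B$ are compact regular submanifolds, integrates $\B$ (via Palais) to a compact connected group $\widetilde\HH$ acting freely with those leaves as orbits, and then takes $\zl$ to be a \emph{vertical} gauge transformation $(x,\h)\mapsto(x,\h\zpu\zm(x))$ of the resulting principal bundle $P\zfl Q$; this preserves $\B$ because on each trivialized piece the elements of $\B$ restrict to fundamental vector fields of the left $\widetilde\HH$-action, which right translations preserve. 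You reach the same fibration in one step from the normalizer description and instead take $\zl$ to be the time-one flow of a compactly supported \emph{horizontal} invariant vector field; both proofs then close with the same ``an isometry equal to the identity on a nonempty open set is the identity'' argument (the paper on $P$, you on the base). Your route buys brevity at the cost of the two classical inputs you flag; the paper's is more self-contained, and its normal-coordinate computation of $\dim\B$ is reused later (e.g.\ in Proposition \ref{P84}). Two steps in your write-up need patching. First, the assertion that $\B$ is exactly the algebra of fundamental vector fields of the right $N_{\GG}(\HH)/\HH$-action deserves a sentence: each $Y\zpe\B$ is complete ($P$ is compact) and its flow is $\GG$-equivariant, hence by your identification a one-parameter subgroup of $N_{\GG}(\HH)/\HH$. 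Second, ``$\bar Z\znoi 0$'' does not by itself yield $\bar\zl\znoi\Id$: a nonzero compactly supported field could a priori have time-one flow equal to the identity. Either choose $\bar Z=\zr\,\zpar/\zpar x_1$ with $\zr\zmai 0$ a nonzero bump function in a chart, so the time-one map visibly displaces every point where $\zr>0$, or observe that $\phi_1=\Id$ would make the flow an $S^1$-action whose fixed-point set has nonempty interior and hence equals all of $M$, forcing $\bar Z=0$.
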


\begin{proof}
We assume $\dim \B\zmai 1$ since the case $\B=0$ is trivial. First we show that the
leaves of $\F$ are compact regular submanifolds. Endow $P$ with a $\GG$-invariant
Riemannian metric $\tilde g$. Given any $p\zpe P$ there are normal coordinates on an
open neighborhood $A$ of $p$, $\ze>0$, and an integer $1\zmei\zlma\zmei r-1$ (see below)
such that $A$ is identified to $B_\ze (0)$, $p$  to the origin and $\GG_p$ to a subgroup
of the orthogonal group $\operatorname{O}(r)$ whose set of fixed points in $B_\ze (0)$ equals
$B_\ze (0)\zin(\{0\}\zpor\RR^\zlma)$ when one sets
$\RR^r =\RR^{r-\zlma}\zpor\RR^\zlma$.

Indeed, first observe that one may assume that $\tilde g (p)$, $p\zeq 0$, equals the standard scalar
product on $\RR^r$. Take any $\g\zpe\GG_p$. As $\g$ is an isometry of $(P,\tilde g)$, it maps
geodesics into geodesics, which implies that, in normal coordinates around $p$, the map
$\g$ is homogeneous of degree one, hence linear (see Remark \ref{Nuevo1}). Since
$\tilde g (p)$ is the scalar product of $\RR^r$, here linear means that ``$\g$ belongs to
$\operatorname{O}(r)$''.
Finally note that the set of fixed points of a subgroup of $\operatorname{O}(r)$ is always
a vector subspace of $\RR^r$.

Note that given $Y\zpe\B,$ then $Y(p)\zpe T_p P$ is invariant under the linear action of
$\GG_p$ on $T_p P$. Conversely, if $v\zpe T_p P$ is invariant under this linear action, then
one can define $Y\zpe\B$, with $Y(p)=v$, by setting $Y(\g\zpu p)=(L_\g)_* v$ (the linear
invariance of $v$ implies that this construction is correct). Therefore, in normal coordinates
$\B(p)=\{0\}\zpor\RR^\zlma\zco T_0 P$ and $\dim \B=\zlma$ (for this reason
$1\zmei\zlma\zmei r-1$ as announced above).

Let $N$ be the leaf of $\F$ passing through $p$. As $\GG_p =\GG_q$ for all $q\zpe N$,
then $N\zin A\zeq N\zin B_\ze (0)\zco(\{0\}\zpor\RR^\zlma )\zin B_\ze (0)$ and, if $\ze$
is small enough, $N\zin B_\ze (0)=(\{0\}\zpor\RR^\zlma )\zin B_\ze (0)$ since
$T_p N=\B(p)$. Therefore, $N$ is a regular submanifold (reasoning as before with every
point of $N$).

Consider a leaf $N'$ of $\F$ and a sequence $\{p_k \}_{k\zpe\NN}\zco N'$ with limit $p$;
then $N'=N$, that is $p\zpe N'$. Indeed, let $\GG'$ be the isotropy subgroup of any point in
$N'$ (all the points in $N'$ have the same isotropy  subgroup). From $\{p_k \}_{k\zpe\NN}\zfl p$
follows that $\GG'\zco\GG_p$. As $\GG'$ and $\GG_p$ are conjugated they have the same
dimension and the same number of connected component, so $\GG'=\GG_p$.

If $k$ is big enough and $\ze$ sufficiently small, since $\GG_p \zpu p_k =p_k$, one has that
$p_k \zpe(\{0\}\zpor\RR^\zlma )\zin B_\ze (0)= N\zin B_\ze (0)$; that is $p_k \zpe N$
and $N'=N$. This reasoning applied to the points of the closure of $N'$ shows that the leaves
of $\F$ are closed, so compact.

On the other hand, there are a simply connected Lie group $\HH$ and an action
$\HH\zpor P\zfl P$ whose algebra of fundamental vector fields equals $\B$ (see \cite{PA}).
Moreover, the actions of $\GG$ and $\HH$ commute because by construction $\GG$ and
$\B$ do so.

By the transitivity of the action of $\GG$, all the isotropy subgroups of the action of $\HH$ are
equal. Denote $\HH'$ this subgroup; then $\HH'$ is the kernel of the morphism
$\h\zpe\HH\mapsto L_\h \zpe\Diff(P)$. Therefore, the quotient group
$\widetilde\HH=\HH/\HH'$ acts freely on $P$ with $\B$ as algebra of fundamental
vector fields. Besides, the orbits of the action of $\widetilde\HH$ are the leaves of $\F$, so
$\widetilde\HH$ is compact and connected.

Thus the action of $\widetilde\HH$ gives rise to a $\widetilde\HH$-principal fibre bundle
$\zp\co P\zfl Q$. Observe that $\dim Q=r-\zlma$, so $1\zmei \dim Q <\dim P$
since $1\zmei\zlma\zmei r-1$.

Consider any $q\zpe Q$ and an open neighborhood $B$ of it such that
$\zp\co\zp^{-1}(B)\zfl B$ is identified, as $\widetilde\HH$-principal fibre bundle, to
$\zp_1 \co B_a (0)\zpor\widetilde\HH\zfl B_a (0)$ where $q\zeq 0$ and $a>0$. Let
$\zm\co B_a (0)\zfl\widetilde\HH$ be a map such that $\zm(0)\znoi\tilde\e$ and
$\zm(q')=\tilde\e$ outside of a compact $0\zpe K\zco B_a (0)$, where $\tilde\e$ is the
neutral element of $\widetilde\HH$. One define $\zl\co P\zfl P$ by setting $\zl=\Id$ on
$P\smallsetminus\zp^{-1}(B)$ and $\zl(x,\h)=(x,\h\zpu\zm(x))$ on
$\zp^{-1}(B)\zeq B_a (0)\zpor\widetilde\HH$.

With respect to the metric $\tilde g$, $\zl$ is not an isometry because equals the identity
on a non-empty open set but $\zl\znoi \Id$. Therefore, $\zl\znoi L_\g$ for all $\g\zpe\GG$
since each $L_\g$ is an isometry.

Moreover, $\zl$ preserves each $Y\zpe\B$ because $Y_{|\zp^{-1}(B)}$, where
$\zp^{-1}(B)\zeq B_a (0)\zpor\widetilde\HH$, is a fundamental vector field of the action
of $\widetilde\HH$ on $B_a (0)\zpor\widetilde\HH$ given by $\h'\zpu(x,\h)=(x,\h'\zpu\h)$
(as we pointed out before, the fundamental vector fields of the left action of a Lie group
on itself are the right invariant vector fields, see \cite{PA}).
\end{proof}

Finally the key result in this section is:

\begin{corollary}\mylabel{C72}
Let $Q$ be a connected manifold. Under the hypotheses of Proposition \ref{P71}, the action
of $\GG$ on $Q\zpor P$ given by $\g\zpu(q,p)=(q,\g\zpu p)$ is not determined by any
family of $\GG$-invariant vector fields.
\end{corollary}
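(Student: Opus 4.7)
The plan is to exhibit a diffeomorphism $\widetilde{\zl}\co Q\zpor P\zfl Q\zpor P$ that preserves every $\GG$-invariant vector field on $Q\zpor P$ yet does not coincide with any $L_\g$, $\g\zpe\GG$, acting on $Q\zpor P$. Once such a $\widetilde{\zl}$ is available, for any family $\F$ of $\GG$-invariant vector fields on $Q\zpor P$, the diffeomorphism $\widetilde{\zl}$ automatically sits in $\Aut(\F)\smallsetminus\GG$, so $\Aut(\F)\znoi\GG$ and the corollary follows.

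To construct $\widetilde{\zl}$, I would first determine the shape of a generic $\GG$-invariant vector field $Y$ on $Q\zpor P$. Decompose $Y=Y_Q+Y_P$ into its horizontal part (tangent to the $Q$-factor) and its vertical part (tangent to the $P$-factor). Because the $\GG$-action on $Q\zpor P$ is trivial on the first factor and the given one on the second, its differential is the identity on $T_qQ$ and $(L_\g)_*$ on $T_pP$. Invariance of $Y$ translates into $Y_Q(q,\g\zpu p)=Y_Q(q,p)$ and $Y_P(q,\g\zpu p)=(L_\g)_* Y_P(q,p)$. The transitivity of the $\GG$-action on $P$ forces $Y_Q$ to depend only on $q$, while the second relation says that for every $q\zpe Q$ the vertical vector field $p\mapsto Y_P(q,p)$ commutes with $\GG$, hence lies in $\B$.

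Next, I would set $\widetilde{\zl}(q,p)=(q,\zl(p))$, where $\zl\co P\zfl P$ is the diffeomorphism produced by Proposition \ref{P71}. Its differential is the identity on $T_qQ$ and $\zl_*$ on $T_pP$, so $(\widetilde{\zl}_*Y)(q,\zl(p))=Y_Q(q)+\zl_* Y_P(q,p)$. Since $\zl$ preserves every element of $\B$, the vertical piece coincides with $Y_P(q,\zl(p))$ and the horizontal piece already equals $Y_Q(q)$; hence $\widetilde{\zl}_*Y=Y$. Finally, if $\widetilde{\zl}$ agreed with $L_\g$ on $Q\zpor P$ for some $\g\zpe\GG$, restriction to a slice $\{q\}\zpor P$ would force $\zl=L_\g$ on $P$, contradicting Proposition \ref{P71}.

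The only point where care is needed is the first step, namely verifying that the horizontal component of a $\GG$-invariant vector field actually loses its $p$-dependence. This rests entirely on the transitivity of the $\GG$-action on $P$ and is what lets $\widetilde{\zl}$ slip through the horizontal factor unimpeded; the rest of the proof is a direct transfer of Proposition \ref{P71} from $P$ to $Q\zpor P$.
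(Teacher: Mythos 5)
Your proposal is correct and follows essentially the same route as the paper: decompose a $\GG$-invariant field into an s-horizontal part (independent of $p$ by transitivity) plus a vertical part lying pointwise in $\B$, then lift the diffeomorphism $\zl$ of Proposition \ref{P71} to $\widetilde\zl(q,p)=(q,\zl(p))$ and observe it preserves both pieces while not being any $L_\g$. The paper states the decomposition more tersely (``$X$ is foliated with respect to the second factor''), but the content and the key step are identical.
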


\begin{proof}
Consider any $\GG$-invariant vector field $X$ on $Q\zpor P$. Then $X$ is foliated with respect
to the foliation given by the second factor, that is the foliation given by the orbits of $\GG$.
Therefore, $X=Y+V$ where $Y$ is s-horizontal, $V$ vertical and each $V(q,\_)$, $q\zpe Q$,
is $\GG$-invariant, that is belongs to $\B$.

Define the diffeomorphism $\tilde\zl\co Q\zpor P\zfl Q\zpor P$ by setting
$\tilde\zl(q,p)=(q,\zl(p))$ where $\zl$ is like in Proposition \ref{P71}. Clearly
$\tilde\zl_* X=X$.

Thus if $\mathcal L$ is a family of $\GG$-invariant vector fields, then
$\tilde\zl\zpe\Aut(\mathcal L)$ but $\tilde\zl\znope\GG\zco\Diff(Q\zpor P)$.
\end{proof}

From Corollary \ref{C72} it immediately follows:

\begin{theorem}\mylabel{T73}
Consider an homogeneous space $\GG/\HH$ and the natural left action of $\GG$ on it.
Assume that:
\begin{itemize}
\item $\GG$ is compact and connected.
\item $\HH$ has two or more elements and does not contain any normal subgroup of $\GG$
but $\{\e\}$.
\end{itemize}

Let $Q$ be a connected manifold. Then the action of $\GG$ on $Q\zpor(\GG/\HH)$ given by
$\g\zpu(q,[\g'])=(q,[\g\g'])$ is effective and cannot be described by any family of
$\GG$-invariant vector fields.
\end{theorem}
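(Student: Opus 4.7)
The plan is to reduce the theorem to a direct application of Corollary \ref{C72} with $P=\GG/\HH$. The work consists of verifying the two hypotheses of Proposition \ref{P71} for the natural $\GG$-action on $P$, together with checking that the action on $Q\zpor P$ is effective; once these points are established the conclusion is exactly what Corollary \ref{C72} delivers.

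For effectiveness, I would compute the kernel of the morphism $\GG\zfl\Diff(Q\zpor(\GG/\HH))$. An element $\g$ acts trivially if and only if $[\g\g']=[\g']$ for every $\g'\zpe\GG$, i.e., $(\g')^{-1}\g\g'\zpe\HH$ for all $\g'$; equivalently $\g$ lies in $\zing_{\g'\zpe\GG}\g'\HH(\g')^{-1}$, which is the largest normal subgroup of $\GG$ contained in $\HH$. By the second hypothesis this intersection reduces to $\{\e\}$, so the kernel is trivial and the $\GG$-action is effective both on $Q\zpor(\GG/\HH)$ and on $\GG/\HH$ alone.

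Next I would verify the hypotheses of Proposition \ref{P71} with $P=\GG/\HH$. Compactness and connectedness of $P$ follow from those of $\GG$; transitivity of the $\GG$-action is built in; effectiveness was just established. It remains to check the inequality $\dim\B<r$, where $r=\dim P$ and $\B$ is the algebra of $\GG$-invariant vector fields on $P$. According to the observation made just before Proposition \ref{P71}, $\dim\B<r$ is equivalent to the isotropy subgroups having two or more elements. But the isotropy subgroup at $[\e]\zpe\GG/\HH$ is exactly $\HH$, which by hypothesis has at least two elements, so $\dim\B<r$.

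With these facts in hand, Corollary \ref{C72} applies directly: the $\GG$-action on $Q\zpor(\GG/\HH)$ cannot be described by any family of $\GG$-invariant vector fields. There is no substantive obstacle beyond this translation; the subtlety (and the real content of the normality hypothesis on $\HH$) is just the algebraic computation showing that the two conditions imposed on $\HH$ correspond, respectively, to effectiveness of the $\GG$-action on $\GG/\HH$ and to the geometric condition $\dim\B<\dim P$ needed to invoke Proposition \ref{P71}.
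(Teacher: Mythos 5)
Your proposal is correct and follows exactly the paper's route: the paper derives Theorem \ref{T73} as an immediate consequence of Corollary \ref{C72} applied to $P=\GG/\HH$, and the verifications you spell out (the kernel of $\GG\zfl\Diff(\GG/\HH)$ being the largest normal subgroup of $\GG$ contained in $\HH$, and $\dim\B<r$ following from the isotropy group $\HH$ at $[\e]$ having at least two elements via the discussion preceding Proposition \ref{P71}) are precisely the intended ones. The only detail left implicit, in both your write-up and the paper's, is that the hypotheses force $\HH$ to be a proper closed subgroup of the connected group $\GG$, whence $r=\dim(\GG/\HH)\zmai 1$ as Proposition \ref{P71} requires.
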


We finish this section with an example that illustrates an effective $\GG$-action on a manifold $M$ satisfying the following conditions: 
\begin{enumerate}[label={\rm (\arabic{*})}]
\item The set of fixed point $M^\GG$ is not trivial, indicating that the $\GG$-action is not transitive,

\item The $\GG$-action on $M$ is determined by a couple of vector field.

\item However, the induced $\GG$-action on $M\smallsetminus M^\GG$ cannot be determined by any family of $\GG$-invariant vector
fields.
\end{enumerate}

\begin{example}\mylabel{E74}
Consider the space $M = \mathbb{R}^{2n}$, where $n \geq 2$, equipped with coordinates $x = (x_1, \ldots, x_{2n})$. We introduce the complex structure $J=\zsu_{j=1}^n (e_{2j}\zte e^*_{2j-1}-e_{2j-1}\zte e^*_{2j})$, where ${e_1, \ldots, e{2n}}$ is the canonical basis of $\mathbb{R}^{2n}.$ With respect to $J,$
the space $M$ becomes a complex vector space of complex dimension $n,$ defined 
by $(a+bi)x=ax+bJx$. Therefore, $M$ is equipped with an effective action of the unitary group $\operatorname{U}(n),$ defined as the set of
elements in $\operatorname{GL}(n,\CC)$ that preserve the Hermitian inner product 
$\zh(x,y)=\langle x,y\rangle-i\langle Jx,y\rangle$. It is evident that $M^{\operatorname{U}(n)}=\{0\}.$

We show that the $\operatorname{U}(n)$-action on $M$ can be described by a couple of vector field.

Let $Y$ be the linear vector field associated to $J$, that is $Y(x)=Jx$, $x\zpe M$. Then
$Y=\zsu_{j=1}^n (-x_{2j}\zpar/\zpar x_{2j-1} +x_{2j-1}\zpar/\zpar x_{2j})$.

A real endomorphism $A$ of $M$ is $\CC$-linear if and only if $A\zci J=J\zci A$, which
is equivalent to say that $A$, as differentiable map, preserves the vector field $Y$.

Set $X=\zx=\zsu_{\zlma=1}^{2n}x_\zlma \zpar/\zpar x_\zlma$ and $X_1 =(\| x\|^2 -1)Y$.
Consider a diffeomorphism $f\co M\zfl M$ which preserves $X$; then, as it is
well known, $f\zpe \operatorname{GL}(2n,\RR)$. Now suppose that $f$ preserves $X_1$ too.
Then  $f$ maps $X_1^{-1}(0)$ into itself, so $f(S^{2n-1})=S^{2n-1}$.
Thus if the length of $x$ equals $1$, then that of $f(x)$ equals $1$ too, and
from the linearity of $f$ it follows that $\| f(x)\|=\| x\|$, $x\zpe M$.
In other words, $f$ is an isometry.
This last fact implies that the function $\|x\|^2 -1$
is $f$-invariant; therefore $f$ preserves $Y$, which means that $f$ is $\CC$-linear.
Thus $f\zpe \operatorname{O}(2n)\zin \operatorname{GL}(n,\CC)=\operatorname{U}(n)$.

Conversely, if $f\zpe\operatorname{U}(n),$ then $f$ preserves $X$ and $X_1$. Hence
$\Aut(X,X_1 )=\operatorname{U}(n)$.

Note that in our case, the origin of $0\in M$ plays a crucial role since the action of $\operatorname{U}(n)$
on $M\smallsetminus\{0\}=M\smallsetminus M^{\operatorname{U}(n)}$, $n\zmai 2$, cannot be determined by means of $\operatorname{U}(n)$-invariant vector
fields.

Indeed, consider the diffeomorphism $\zq\co M\smallsetminus\{0\} \zfl\RR^+ \zpor S^{2n-1}$
given by $\zq(x)=(\|x\|,x/\|x\|)$. Then the action of $\operatorname{U}(n)$
on $M\smallsetminus\{0\}$
becomes the action of $\operatorname{U}(n)$ on $\RR^+ \zpor S^{2n-1}$ defined by
$\g\zpu(t,y)=(t,\g\zpu y)$ and Corollary \ref{C72} applies.
\end{example}

\section{Some examples on compact connected linear groups} \mylabel{sec-8}
Both the natural action of $\operatorname{SO}(2)$ on $\RR^2$ and
the natural action of $\operatorname{SU}(2)$ on $\CC^2 \zeq  \RR^4$ have a free point. Therefore, these actions are determinable by Theorem \ref{T11}.
However we shall see that the natural actions of $\operatorname{SO}(n)$ on $\RR^n$, $n\zmai 3$, and of
$\operatorname{SU}(m)$ on $\CC^m \zeq  \RR^{2m}$, $m\zmai  3$,
are not determinable.

Indeed, let us start with $\operatorname{SO}(n)$, $n\zmai 3$. First observe
that at any $x\zpe\RR^n \smallsetminus\{0\}$ the only direction that is invariant
under the action of the isotropy group of $x$ is given by $\zx(x)$,
$\zx=\zsu_{j=1}^k x_j \zpar/\zpar x_j$. Therefore, if $X$ is an
$\operatorname{SO}(n)$-invariant vector field on
$x\zpe\RR^n \smallsetminus\{0\}$, then $X=f\zx$ where $f$ is an
$\operatorname{SO}(n)$-invariant function, that is to say constant on each
sphere centered at the origin.

This last fact implies that $f$ is $\operatorname{O}(n)$-invariant too and, as a
consequence, that $X$ is $\operatorname{O}(n)$-invariant too. Since the origin is a
fixed point, necessarily both the natural actions of $\operatorname{O}(n)$ and $\operatorname{SO}(n)$
have  the same set of invariant vector fields on $\RR^n$. Therefore, the natural action of
$\operatorname{SO}(n)$ on $\RR^n$, $n\zmai 3$, cannot be determined by means of
$\operatorname{SO}(n)$-invariant vector fields.

Now consider the case of $\operatorname{SU}(m)$, $m\zmai 3$. As in Example \ref{E74}, set
$Y=\zsu_{j=1}^m (-x_{2j}\zpar/\zpar x_{2j-1} +x_{2j-1}\zpar/\zpar x_{2j})$. Then the isotropy group of each $x\znoi 0$ has
just one invariant vector complex line that, as real plane, has  $\{\zx(x),Y(x)\}$
as a basis. Since $Y$ is invariant under the action of  $\operatorname{U}(m)$,
every  $\operatorname{SU}(m)$-invariant vector field $X$ on
$\RR^{2m} \smallsetminus\{0\}$ writes $X=f\zx +gY$ where $f$ and $g$  are
$\operatorname{SU}(m)$-invariant functions (and therefore
$\operatorname{U}(m)$-invariant). Finally the same reasoning as
before shows that $\operatorname{SU}(m)$ cannot be described
by means of $\operatorname{SU}(m)$-invariant vector fields, although the natural action of $\operatorname{U}(m)$ can be
determined by two invariant vector fields (Example \ref{E74}).

Our next example shows that the natural action of the symplectic group can be described by a couple of invariant
vector field.

\begin{example}\mylabel{E81}
On $\RR^{4r}$, $r\zmai 1$, let consider the following complex structures
$$J=\zsu_{\zlma=1}^r \left(e_{4\zlma-2}\zte e^*_{4\zlma-3}-e_{4\zlma-3}\zte e^*_{4\zlma-2}
+e_{4\zlma}\zte e^*_{4\zlma-1}-e_{4\zlma-1}\zte e^*_{4\zlma}\right)$$
$$K=\zsu_{\zlma=1}^r \left(e_{4\zlma-1}\zte e^*_{4\zlma-3}-e_{4\zlma}\zte e^*_{4\zlma-2}
-e_{4\zlma-3}\zte e^*_{4\zlma-1}+e_{4\zlma-2}\zte e^*_{4\zlma}\right)$$
$$L=\zsu_{\zlma=1}^r \left(e_{4\zlma}\zte e^*_{4\zlma-3}+e_{4\zlma-1}\zte e^*_{4\zlma-2}
-e_{4\zlma-2}\zte e^*_{4\zlma-1}-e_{4\zlma-3}\zte e^*_{4\zlma}\right)$$
where $\{e_1 ,\dots,e_{4r}\}$ is the canonical base of $\RR^{4r}$.

With respect to $J$, $K$ and $L$
the space $\RR^{4r}$ becomes a quaternionic (left) vector space of dimension $r$
by setting $(a+bi+cj+dk)x=ax+bJx+cKx+dLx$.

Let  $Y,Z,U$ be the linear vector fields on $\RR^{4r}$ given by $Y(x)=Jx$,
$Z(x)=Kx$ and $U(x)=Lx$, $x\zpe\RR^{4r}$, respectively. A real endomorpism $A$
of $\RR^{4r}$ is $\mathbb H$-linear if and only if $A\zci J=J\zci A$, $A\zci K=K\zci A$
and $A\zci L=L\zci A$, which is equivalent to say that $A$, as differentiable map,
preserves $Y$, $Z$ and $U$.
Thus the symplectic group $\operatorname{Sp}(r)$, $r\zmai 1$, is the set of those
$A\zpe\operatorname{O}(4r)$ that preserves $Y$, $Z$ and $U$.

Let consider the open intervals $I_1 =(4,9)$, $I_2 =(16,25)$ and $I_3 =(36,49)$, and let 
$\zf_1 ,\zf_2 ,\zf_3 \colon\RR\zfl\RR$ be functions such that:
\begin{enumerate}[label={\rm (\arabic{*})}]
\item\mylabel{NU1} $\zf_1^2 +\zf_2^2 +\zf_3^2 >0$ on $\RR \smallsetminus\{1\}$
and $\zf_1 (1)=\zf_2 (1)=\zf_3 (1)=0$.
\item\mylabel{NU2} $\zf_a (I_b )=\zd_{ab}$ for every $a=1,2,3$ and every $b=1,2,3$.
\end{enumerate}

Now set $X=\zx$ and $X_1 (x)=\zf_1 (\| x\|^2 )Y(x)+\zf_2 (\| x\|^2 )Z(x)
+\zf_3 (\| x\|^2 )U(x)$, $x\zpe\RR^{4r}$, which defines a vector field $X_1$ on
$\RR^{4r}$. Observe that $X_1^{-1}(0)=\{0\} \zun S^{4r-1}$.

Then $\operatorname{Sp}(r)$ equals $\operatorname{Aut}(X,X_1 )$.
Indeed, obviously $\operatorname{Sp}(r)\zco\operatorname{Aut}(X,X_1 )$.
Conversely, consider any  $f\zpe\operatorname{Aut}(X,X_1 )$. As $f$ preserves $X$,
necessarily it is $\RR$-linear. On the other hand as $f$ preserves $X_1$ then
$f(X_1^{-1}(0))=X_1^{-1}(0)$, so $f(S^{4r-1})=S^{4r-1}$ and $f$ is an isometry
(see Example \ref{E74}).

Observe that $X_1 =Y$ on  $B_3 (0) \smallsetminus \overline{B}_2 (0)$. Therefore,
$f$ preserves $Y$ on this open set and, by the analyticity of $f$ and $Y$,
on the whole $\RR^{4r}$. Analogously $f$ preserves $Z$ and $U$,
hence $f\zpe\operatorname{Sp}(r)$.
\end{example}

We now consider a generic manifold $M$ of dimension $m$, and let $\operatorname{Act}(M)$ be
the set of all the effective actions of compact connected Lie groups on $M$.
Thinking of these actions as subgroups of $\operatorname{Diff}(M)$ gives rise,
by inclusion, to a partial order on  $\operatorname{Act}(M)$. Finally, let
 $\operatorname{Act}_0(M)$ be the set of those elements of  $\operatorname{Act}(M)$
 that are determined by some family of invariant vector fields.
 
The poset $\operatorname{Act}_0(M)$ has nice structural properties:

\begin{lemma}\mylabel{lem:acc}
The ascending chain condition holds in $\operatorname{Act}_0(M)$. In particular,  every element in  $\operatorname{Act}_0(M)$  is included in some maximal
element.
\end{lemma}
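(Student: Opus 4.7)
The plan is to establish the ascending chain condition through a uniform dimension bound on compact Lie groups acting effectively on $M$, combined with the elementary fact that a connected Lie group admits no proper closed connected Lie subgroup of full dimension. Given an ascending chain $\GG_1 \zco \GG_2 \zco \cdots$ in $\operatorname{Act}_0(M)$, I would proceed in three steps.

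First I would bound the dimensions uniformly. Averaging any auxiliary Riemannian metric over the compact group $\GG_i$ yields a $\GG_i$-invariant Riemannian metric $g_i$ on $M$, so $\GG_i$ embeds as a subgroup of $\operatorname{Isom}(M,g_i)$. By the Myers--Steenrod theorem, $\operatorname{Isom}(M,g_i)$ is a Lie group and its dimension is at most $m(m+1)/2$ (the dimension of the space of Killing fields). Hence $\dim\GG_i \zmei m(m+1)/2$ for every $i$. Since the inclusion $\GG_i \zco \GG_{i+1}$ forces $\dim\GG_i \zmei \dim\GG_{i+1}$, the integer sequence $\{\dim\GG_i\}$ is non-decreasing and bounded, and therefore eventually constant: there exists $N$ with $\dim\GG_i = d$ for all $i\zmai N$.

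Next, for $i\zmai N$, I would argue $\GG_i = \GG_{i+1}$. Fix a $\GG_{i+1}$-invariant Riemannian metric $g$ on $M$, which is automatically $\GG_i$-invariant. Both $\GG_i$ and $\GG_{i+1}$ are compact subgroups of the Lie group $\operatorname{Isom}(M,g)$; being compact in a Hausdorff topological group they are closed, and by Cartan's closed subgroup theorem they are closed Lie subgroups of $\operatorname{Isom}(M,g)$ whose Lie group structures coincide with the intrinsic ones. Then $\GG_i$ sits as a closed connected Lie subgroup of the connected Lie group $\GG_{i+1}$, of the same dimension; so its Lie algebra equals that of $\GG_{i+1}$, it contains an open neighbourhood of the identity, and by connectedness of $\GG_{i+1}$ one concludes $\GG_i = \GG_{i+1}$. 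This establishes ACC. The ``in particular'' clause then follows without appeal to Zorn's lemma: starting from any $\GG\zpe\operatorname{Act}_0(M)$, any strictly ascending chain extending $\GG$ must terminate by ACC, and the terminal element is a maximal element of $\operatorname{Act}_0(M)$ containing $\GG$.

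The main delicacy I expect is in step three: identifying the intrinsic Lie group topology of each $\GG_i$ (coming from its smooth effective action on $M$) with the topology inherited as a subset of $\operatorname{Isom}(M,g)$. This relies on the general fact that a continuous injective homomorphism between compact Lie groups is an embedding, applied so that the inclusion $\GG_i \zga \GG_{i+1}$ is genuinely a morphism of Lie groups rather than merely of abstract groups. Once this point is secured the rest is routine.
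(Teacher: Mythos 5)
Your proposal is correct and follows essentially the same route as the paper: both bound $\dim\GG$ by $m(m+1)/2$ via an invariant Riemannian metric and the classical bound on the isometry group, and both conclude that a strictly ascending chain of connected compact groups must strictly increase dimension and hence terminate. Your argument merely makes explicit the step the paper asserts directly (that a proper inclusion of connected compact Lie groups forces a strict dimension drop) and the point about compatibility of Lie group structures, so no further comment is needed.
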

\begin{proof}
Recall that elements in  $\operatorname{Act}_0(M)$ can be regarded as connected compact groups of isometries for a suitable Riemannian metric on $M$. Consequently, the 
dimension of any $\GG$
in $\operatorname{Act}_0(M)$ is bounded above by $m(m+1)/2$ by Theorems 3.3 and 3.4 in \cite[Chapter VI]{KN}. 

Now, consider any strictly ascending sequence
$$\GG_1\subset\GG_2\subset\ldots\subset \GG_r\subset$$
in $\operatorname{Act}_0(M).$ Since every group is connected, we have
$0<\operatorname{dim}\GG_i < \operatorname{dim}\GG_{i+1}\leq m(m+1)/2$, and this sequence must eventually stop. Moreover, if the sequence cannot be extended, it must conclude with a maximal element.
\end{proof}


A direct consequence of the ascending chain condition is the following stability property for the maximal elements in $\operatorname{Act}_0(M)$:

\begin{proposition}\mylabel{P82}
Let $\GG$ be a maximal element in $\operatorname{Act}_0 (M)$, and let $\mathcal W$ be
a family of $\GG$-invariant vector fields on $M$. Then either $\mathcal W$ does not
determine any element of $\operatorname{Act}_0 (M)$ or
 $\GG=\operatorname{Aut}(\mathcal W)$.
\end{proposition}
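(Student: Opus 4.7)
The plan is to argue by the contrapositive of the alternative: suppose $\mathcal W$ does determine some element of $\operatorname{Act}_0(M)$, and show this forces $\GG = \operatorname{Aut}(\mathcal W)$. Let $\HH \in \operatorname{Act}_0(M)$ be such that $\HH = \operatorname{Aut}(\mathcal W)$ (so $\HH$ is regarded as a subgroup of $\operatorname{Diff}(M)$ via its effective action).

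First I would observe that since every vector field in $\mathcal W$ is $\GG$-invariant, each element of $\GG$, viewed as a diffeomorphism of $M$, preserves every element of $\mathcal W$. By definition of $\operatorname{Aut}(\mathcal W)$, this gives the inclusion $\GG \subseteq \operatorname{Aut}(\mathcal W) = \HH$ inside $\operatorname{Diff}(M)$.

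Now I would apply the maximality hypothesis. Both $\GG$ and $\HH$ lie in $\operatorname{Act}_0(M)$ (by assumption $\GG$ is a maximal element and $\HH$ is determined by $\mathcal W$), and the inclusion $\GG \subseteq \HH$ is precisely the partial order on $\operatorname{Act}_0(M)$ defined earlier. Since $\GG$ is maximal, no strictly larger element of $\operatorname{Act}_0(M)$ contains it, so the inclusion must be an equality: $\GG = \HH = \operatorname{Aut}(\mathcal W)$, as required.

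I do not expect any real obstacle: the statement follows almost tautologically from the definitions of $\operatorname{Aut}(\mathcal W)$ and of maximality in $\operatorname{Act}_0(M)$, together with the trivial remark that a $\GG$-invariant family automatically has $\GG$ inside its automorphism group. The only subtle point to state carefully is that $\HH$, as an abstract compact connected Lie group acting effectively on $M$, is identified with its image in $\operatorname{Diff}(M)$, so that the set-theoretic inclusion $\GG \subseteq \HH$ is meaningful as an inequality in the poset $\operatorname{Act}_0(M)$; this is already built into the setup preceding the statement.
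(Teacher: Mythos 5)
Your proof is correct and is exactly the argument the paper intends: the paper gives no written proof, presenting the proposition as a ``direct consequence'' of the preceding setup, and your spelled-out version ($\GG$-invariance of $\mathcal W$ gives $\GG\subseteq\Aut(\mathcal W)$, then maximality in the poset forces equality) is that consequence. The only cosmetic difference is that the paper credits the ascending chain condition, which is really only needed (via Lemma \ref{lem:acc}) to guarantee that maximal elements exist; your observation that the proposition itself uses nothing beyond the definition of maximality is accurate.
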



 \begin{remark}\mylabel{R83}
A particular case of the above proposition is as follows. Suppose that $\GG$ is a maximal element in $\operatorname{Act}_0 (M)$ and is
determined by two $\GG$-invariant vector fields $X,X_1$. If we perturb these vector fields to obtain a new pair, $X',X'_1$, then one of two scenarios arises: either $X',X'_1$ do not determine any element in $\operatorname{Act}_0(M)$, or we have $\GG = \operatorname{Aut}(X', X'_1)$.
\end{remark}

 \begin{proposition}\mylabel{P84}
The natural action of $\operatorname{U}(m)$ on $\RR^{2m}$, $m\zmai 1$,
is maximal in $\operatorname{Act}_0(\RR^{2m})$.
\end{proposition}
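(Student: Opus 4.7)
The plan is to suppose $\operatorname{U}(m)\subsetneq\GG'\in\operatorname{Act}_0(\RR^{2m})$ is compact and connected, and derive a contradiction by imitating the argument that rules out $\operatorname{SO}(n)$ in Section~\ref{sec-8}. Write $\GG'=\operatorname{Aut}(\mathcal W)$ for a family $\mathcal W$ of $\GG'$-invariant vector fields. Since each $V\in\mathcal W$ is in particular $\operatorname{U}(m)$-invariant, the pointwise analysis already employed in Section~\ref{sec-8} (the isotropy $\operatorname{U}(m-1)$ at $p\neq 0$ fixes in $T_p\RR^{2m}$ only the plane spanned by $\xi(p)$ and $Y(p)$, while $\operatorname{U}(m)$-invariant functions depend only on $\|x\|^2$) forces $V=f_V(\|x\|^2)\,\xi+h_V(\|x\|^2)\,Y$; in particular $V(0)=0$, so the origin is fixed by $\GG'$.

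Next I would linearize $\GG'$ at the origin. Averaging any Riemannian metric yields a $\GG'$-invariant one, and since an isometry of a connected Riemannian manifold is determined by its $1$-jet at a single point, the tangent representation $\rho\colon\GG'\to\operatorname{GL}(2m,\RR)$ is injective. By Schur's lemma applied to the $\RR$-irreducible standard representation of $\operatorname{U}(m)$ on $\RR^{2m}$ (whose real commutant is $\RR\oplus\RR J$), every $\operatorname{U}(m)$-invariant inner product is a positive multiple of the Euclidean one, so $\rho(\GG')\subset\operatorname{O}(2m)$ and, by connectedness, $\rho(\GG')\subset\operatorname{SO}(2m)$. Using that $\operatorname{U}(m)$ is a maximal connected proper subgroup of $\operatorname{SO}(2m)$ for $m\geq 2$ (Borel--de Siebenthal, or the fact that $\operatorname{SO}(2m)/\operatorname{U}(m)$ is an irreducible Hermitian symmetric space of type DIII), and since $\operatorname{U}(1)=\operatorname{SO}(2)$ when $m=1$, the image $\rho(\GG')$ must be either $\operatorname{U}(m)$ or $\operatorname{SO}(2m)$.

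If $\rho(\GG')=\operatorname{U}(m)$ then by injectivity $\GG'\cong\operatorname{U}(m)$, forcing $\GG'=\operatorname{U}(m)$ as subgroups of $\operatorname{Diff}(\RR^{2m})$ and contradicting the strict inclusion. In the remaining case $\rho(\GG')=\operatorname{SO}(2m)$ with $m\geq 2$, I would argue as in Section~\ref{sec-8}: Bochner's linearization makes the isotropy of a nonzero point $p$ close to $0$ surject via $\rho$ onto $\operatorname{SO}(2m-1)$, whose only invariant direction in $T_p\RR^{2m}$ is $\xi(p)$; propagating this statement to all nonzero orbits, which are spheres of the same orbit type, every $V\in\mathcal W$ reduces to $V=f_V(\|x\|^2)\,\xi$. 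But any linear map commutes with the radial field $\xi$ and any orthogonal map preserves $\|x\|^2$, so the whole orthogonal group $\operatorname{O}(2m)$ preserves every such $V$; hence $\operatorname{O}(2m)\subset\operatorname{Aut}(\mathcal W)=\GG'$, and linearizing this inclusion at the origin yields $\rho(\GG')\supseteq\operatorname{O}(2m)$, contradicting $\rho(\GG')\subseteq\operatorname{SO}(2m)$.

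The hardest step I foresee is the ``propagation'' claim in the last paragraph: Bochner only gives a linearization near the fixed origin, so upgrading the conclusion ``$V$ is purely radial'' from a neighborhood of $0$ to the whole of $\RR^{2m}\smallsetminus\{0\}$ requires that the isotropy representation at every nonzero point is still the standard $\operatorname{SO}(2m-1)$ acting on the complement of $\RR\,\xi(p)$. This should follow from the constancy of the orbit type on the single non-trivial stratum combined with the slice theorem, but it is the step requiring the most care.
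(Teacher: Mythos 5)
Your first half follows the paper's own route: the $\GG'$-orbits are the spheres centred at the origin, the origin is fixed, the tangent representation $\rho$ at $0$ lands in $\operatorname{SO}(2m)$ after averaging a metric, and maximality of $\operatorname{U}(m)$ in $\operatorname{SO}(2m)$ (the paper cites \cite{AFG}) reduces everything to the case $\rho(\GG')=\operatorname{SO}(2m)$, the case $\rho(\GG')=\operatorname{U}(m)$ being disposed of by injectivity exactly as you say. One small repair: ``$V(0)=0$ for every $V\in\mathcal W$'' does not by itself fix the origin, since an automorphism merely permutes the common zero set of $\mathcal W$, which may contain whole spheres; the origin is fixed because the $\GG'$-orbit of $0$ is a compact boundaryless homogeneous space that is a union of concentric spheres and $\{0\}$, hence must be $\{0\}$.

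The genuine gap is the step you flagged, and it is worse than a propagation problem. At $x\neq 0$ the isotropy group $\GG'_x$ fixes precisely the line $\tilde g$-orthogonal to $T_xS_x$, where $\tilde g$ is the \emph{invariant} metric, not the Euclidean one; in the $\operatorname{U}(m)$-frame this line is spanned by $a\,\zx(x)+b\,Y(x)$ with $a>0$ but $b$ possibly nonzero. Bochner linearization identifies the invariant direction with the radial direction \emph{of the linearizing chart}, which differs from the standard chart by an unknown diffeomorphism, so you cannot conclude that the invariant direction is $\zx(x)$. Concretely, conjugating the standard $\operatorname{SO}(2m)$ by $\Phi(x)=e^{i\theta(\|x\|^2)}x$ — which commutes with $\operatorname{U}(m)$ and preserves every Euclidean sphere — yields a group containing $\operatorname{U}(m)$ whose invariant normal field has $b\not\equiv 0$. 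For a field $V=f\,(a\zx+bY)$ with $b\not\equiv 0$, complex conjugation sends $Y$ to $-Y$ and hence does not preserve $V$, so your inclusion $\operatorname{O}(2m)\subset\Aut(\mathcal W)$ fails and the contradiction evaporates. The paper closes this case by a different device: it builds a complete $\GG'$-invariant field $W$ equal to the normal-coordinate radial field near $0$ and to a rescaled invariant unit normal elsewhere, straightens $W$ globally to $\zx$ via the Sternberg linearization theorem (Theorem \ref{Nuevo2}), deduces that the conjugated group preserves $\zx$ and is therefore linear, hence linearly conjugate to the natural $\operatorname{SO}(2m)$-action, which Section \ref{sec-8} has already excluded from $\operatorname{Act}_0(\RR^{2m})$. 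Your argument needs this (or an equivalent) global straightening step to handle the possible $Y$-component.
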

\begin{proof}
Let us start recalling a result needed later on.
Consider two connected compact Lie subgroup $\HH,\HH'$ of $\operatorname{SO}(2m)$,
$\zmai 1$, the first one isomorphic to $\operatorname{U}(m)$. If
$\HH\zco\HH'$ then either $\HH=\HH'$ or
 $\HH= \operatorname{SO}(2m)$ \cite{AFG}.

Let $\GG\in\operatorname{Act}_0(\RR^{2m})$ such that
$\operatorname{U}(m)\zco\GG$. Then every $\GG$-orbit has
dimension strictly smaller than $2m$, as otherwise $\RR^{2m}$ would be compact. Furthermore, since $\operatorname{U}(m)\zco\GG$,  every $\GG$-orbit contains the induced $\operatorname{U}(m)$-orbit. This induced orbit is either a $(2m-1)$-sphere centered at the origin or the origin, which we identify as the degenerate sphere with radius zero. Combining both arguments, we conclude that the $\GG$-orbits consist of all the spheres centered at the origin, including the degenerate one, and the origin is a fix point of the $\GG$-action.

Endow $\RR^{2m}$ with a $\GG$-invariant Riemannian
metric $\tilde g$. For this metric consider normal coordinates $(u_1 ,\dots,u_{2m})$
in an open neighborhood $A$ of the origin, which is identified to some open ball
$B_\ze (0)$ (origin to origin). 
As usual we suppose that $\widetilde{g}(p),$ $p\equiv 0,$ equals the scalar product of $\RR^{2m}$.
Then both $\operatorname{U}(m)$ and $\GG$ are identified
to subgroups of $\operatorname{SO}(2m)$ (see the second paragraph of the proof of Proposition \ref{P71}).

If $m=1$ then $\operatorname{dim}\operatorname{U}(1)
=\operatorname{dim}\operatorname{SO}(2)=1$ and necessarily
$\GG=\operatorname{U}(1)$. Therefore, assume $m\zmai 2$ and
$\GG\znoi\operatorname{U}(m)$, which implies that $\GG$ is isomorphic to
$\operatorname{SO}(2m)$ and $\dim\GG=m(2m-1)$.

Moreover the action of $\GG$ on every
sphere of positive radius is effective. Indeed, if not the Lie algebra of $\GG$,
and so that of $\operatorname{SO}(2m)$, includes a proper ideal, which excludes
the case $m\zmai 3$. If $m=2$, as the action of $\operatorname{U}(2)$ on these
spheres is effective, this ideal ought to have codimension $\zmai 4$, but there is
no such ideal.

 Set $Y=\zsu_{j=1}^m (-x_{2j}\zpar/\zpar x_{2j-1} +x_{2j-1}\zpar/\zpar x_{2j})$
(see Example \ref{E74} again). At each $x\znoi 0$, and for the action of
$\operatorname{U}(m)$, the isotropy group of this point has just a vector real plane
of invariant vectors one of whose basis is $\{ \zx (x),Y(x)\}$. Therefore, each
$\operatorname{U}(m)$-invariant vector field $X$ on
$\RR^{2m}\smallsetminus \{0\}$ writes $X=f\zx+gY$ where $f$  and $g$ are
functions of $\| x\|^2$.

Let $S_x$, $x\znoi 0$, be the sphere centered at the origin and passing through $x$.
Since the action of $\GG$ is effective on all its orbits but $\{0\}$ and
$\dim\GG=\dim\operatorname{SO}(2m)$, a vector $v\zpe T_x \RR^{2m}$, $x\znoi 0$,
is invariant under the action of the isotropy group of $x$ if an only if it is
$\tilde g$-orthogonal to $T_x S_x$. Therefore, there exists a $\GG$-invariant
vector field $V$ on $\RR^{2m}\smallsetminus\{0\}$ such that:
\begin{enumerate}[label={\rm (\arabic{*})}]
\item\mylabel{NU3} $\tilde g(V,V)=1$.
\item\mylabel{NU4} $V$ is $\tilde g$-orthogonal to every $S_x$, $x\znoi  0$, and
points outward.
\end{enumerate}

Obviously $V$ is $\operatorname{U}(m)$-invariant; therefore, it writes
$V=a\zx+bY$ where $a$ and $b$ are functions of $\| x\|^2$ and $a>0$ everywhere.

Observe that any $\GG$-invariant vector field defined on a punctured open ball
$B_\zr (0)\smallsetminus\{0\}$ is the product of a function of $\| x\|^2$ and $V$.
Let $R$ be the radial vector field of the normal coordinates $(u_1 ,\dots,u_{2m})$, that
is to say $R=\zsu_{j=1}^{2m} u_j \zpar /\zpar u_j$. On a punctured open ball
$B_\zt (0)\smallsetminus\{0\}$ included in $A$ one has $R=fV$ where $f$ is a positive
function of $\| x\|^2$.

Consider a positive function $\zf$ of $\| x\|^2$ defined on
$\RR^{2m} \smallsetminus\{ 0\}$ such that $\zf V$ is complete and
$\zf=f$ on $B_{\zt /2} (0)\smallsetminus\{0\}$. Define a new vector field $W$ on
$\RR^{2m}$ by setting $W=R$ on $B_{\zt /2} (0)$ and $W=\zf V$
on $\RR^{2m}\smallsetminus\{ 0\}$. Then $W$ is complete at its linear part at the origin
equals the identity. Moreover, $W$ is $\GG$-invariant.

Now by Sternberg linearization theorem (see Theorem \ref{Nuevo2} and page 319
of \cite{TV1}) there exists a diffeomorphism $F\co\RR^{2m}\zfl\RR^{2m}$
such that $F_* W=\zx$.

Consider the conjugate action on $\RR^{2m}$, which will be called $\GG^*$, given by
$\g\zpu x=F(\g\zpu F^{-1}(x))$. Clearly $\GG^*$ belongs to
$\operatorname{Act}_0 (\RR^{2m})$ and $\zx$ is $\GG^*$-invariant; therefore,
$\GG^*$ is linear. As this group is compact, there always exists a $\GG^*$-invariant
scalar product. Thus via a linear automorphism of $\RR^{2m}$, the action of
$\GG^*$ can be assimilated to the natural action of $\operatorname{SO}(2m)$
on $\RR^{2m}$. But this last one does not belong to
$\operatorname{Act}_0 (\RR^{2m})$, {\em contradiction}.
\end{proof}

\section{Some open questions}\mylabel{Open}

In this last section we collect a list of open questions that naturally arise from our work.

First, we observe that our main result, Theorem \ref{T11}, can be expressed in terms of centralizers of elements within the full group of diffeomorphisms.

Indeed, let us $Z$ be a complete vector field $Z$ on a manifold $P$, and let $\zF_t$ be its corresponding flow. Consider two rationally independent real numbers, denoted as $a$ and $b$. Now, given a diffeomorphism $f\co P \zfl P$, it commutes with every $\zF_t$, where $t\zpe\RR$, if and only if it commutes with both $\zF_a$ and $\zF_b$.

Additionally, we observe that $f$ commutes with the flow $\zF_t$, that is $f\in C_{\Diff(M)}(\zF_t)$, if and only if it preserves the vector field $Z$. Therefore, Theorem \ref{T11} leads us to the following conclusion:

\begin{corollary}\mylabel{Nuevo3}
Consider an action of a connected compact Lie group $\GG$ on a connected manifold
$M$. If the action of $\GG$ possess a free point, then there exist $f_i\in \Diff(M)$, $i=1,\ldots,4$, such that each $f_i$ is diffeotopic to the identity map, and
$$\GG=\Interseccion\limits_{i=1}^4 C_{\Diff(M)}(f_i).$$
\end{corollary}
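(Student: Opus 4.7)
The plan is to bootstrap directly from Theorem \ref{T11}. That theorem supplies, under the free-point hypothesis, two complete vector fields $X$ and $X_1$ on $M$ such that $\GG = \Aut(X, X_1)$. Let $\zF_t$ and $\zQ_t$ denote the respective flows; each is a one-parameter group of diffeomorphisms because $X$ and $X_1$ are complete. I would then pick two rationally independent real numbers $a$ and $b$, and set $f_1 = \zF_a$, $f_2 = \zF_b$, $f_3 = \zQ_a$, $f_4 = \zQ_b$. Each $f_i$ is immediately diffeotopic to the identity via its defining flow: the smooth path $s \in [0,1] \mapsto \zF_{sa}$ is a family of diffeomorphisms connecting $\Id = \zF_0$ to $f_1 = \zF_a$, and the three other cases are identical.

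Next I would invoke two elementary facts already hinted at in the paragraph preceding the corollary. First, for any complete vector field $Z$ with flow $\zH_t$, a diffeomorphism $g \in \Diff(M)$ preserves $Z$ (equivalently $g_* Z = Z$) if and only if $g \circ \zH_t = \zH_t \circ g$ for every $t \in \RR$. Second, the subset $H_g := \{t \in \RR : g \circ \zH_t = \zH_t \circ g\}$ is a closed subgroup of $(\RR, +)$: it is a subgroup thanks to the flow identity $\zH_{t+s} = \zH_t \circ \zH_s$, and closed by continuity of $t \mapsto \zH_t$. If $H_g$ contains two rationally independent numbers $a$ and $b$, then it contains the dense subgroup $a\ZZ + b\ZZ$; being closed, it must coincide with $\RR$. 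Consequently, $g$ preserves $Z$ if and only if $g \in C_{\Diff(M)}(\zH_a) \cap C_{\Diff(M)}(\zH_b)$.

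Applying this dichotomy separately to $X$ and $X_1$ then yields
\[
\GG = \Aut(X, X_1) = \bigcap_{i=1}^{4} C_{\Diff(M)}(f_i),
\]
which is the required identity. I do not foresee any substantial obstacle: once Theorem \ref{T11} is in place, the corollary is essentially a change of language, translating the automorphism-group description into the centralizer description through the standard correspondence between complete vector fields and one-parameter groups of diffeomorphisms. The only routine points to verify are the closed-subgroup structure of $H_g$ and the classical observation that any dense closed subgroup of $\RR$ is all of $\RR$.
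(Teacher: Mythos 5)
Your proposal is correct and is essentially the paper's own argument: the authors likewise take the flows of the two vector fields from Theorem \ref{T11}, evaluate them at two rationally independent times $a$ and $b$ to get the four diffeomorphisms, and use the fact that commuting with $\zF_a$ and $\zF_b$ forces commuting with the whole flow, hence preservation of the vector field. Your explicit verification that the set of commuting times is a closed subgroup of $\RR$ (so density of $a\ZZ+b\ZZ$ suffices) is exactly the routine point the paper leaves implicit.
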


Then, it is natural to ask whether group actions of connected Lie groups on connected manifolds can be described as centralizer of diffeomorphisms:

\begin{question}\mylabel{QU1}
Consider an action of a connected compact Lie group $\GG$ on a connected manifold
$M$.
\begin{enumerate}[label={\rm (\alph{*})}]
\item\mylabel{QU1.1} Under which hypothesis is it possible to find a family of diffeomorphisms $\{f_i\in\Diff(M): i\in I\}$ such that $$\GG=\Interseccion\limits_{i\in I} C_{\Diff(M)}(f_i)?$$

\item\mylabel{QU1.2} If such a family exists, what is the minimum number of diffeomorphisms it must contain?

\end{enumerate}
\end{question}

In Section \ref{sec-8} it is shown that there exist connected linear compact groups whose natural action is not determinable by any family of invariant vector fields. Lemma \ref{lem:acc} and Proposition \ref{P82} suggest that an inductive argument via maximal subgroups of connected compact Lie groups, as classified in \cite{AFG}, may allow to tackle the following open question:

\begin{question}\mylabel{QU2}
For every integer $n > 0$, what is the complete description of all connected compact subgroups $\GG \subset \operatorname{GL}(n,\RR)$ such that the natural $\GG$-action on $\RR^n$ is determinable?
\end{question}

In a completely general setting, solving the problem equivalent to Question \ref{QU2}, which seeks the complete description of elements in $\operatorname{Act}_0(M)$, may seem unattainable for a generic $M$. Nevertheless, Proposition \ref{P82} and Remark \ref{R83} indicate that maximal elements in $\operatorname{Act}_0(M)$ could be characterized using invariant vector fields and a suitable definition of action stability. This leads to the following inquiry:

\begin{question}\mylabel{QU3}
Consider a connected $m$-manifold $M$ and a family of vector fields $\mathcal{F}$ on $M$. Is it possible to devise a suitable definition of stability for the $\text{Aut}(\mathcal{F})$-action in terms of the elements in $\mathcal{F}$ and characterize those actions that are stable?
\end{question}

Finally, our methods are not applicable in the $C^0$ class because they rely on Lemma 3.4 in \cite{TV1}, which does not hold for continuous maps. Similarly, they cannot be applied in the analytic case due to the use of plateau functions. Therefore, a natural question arises:

\begin{question}\mylabel{QU4}
Do our results hold true in the continuous or analytic classes?
\end{question}


\end{document}